\begin{document}

\newcommand\R{\mathbb{R}}
\newcommand\bp{\mathbb{P}}
\newcommand\Z{\mathbb{Z}}
\newcommand\C{\mathbb{C}}
\newcommand\D{\Delta}

\newcommand\E{E}           % alias for \Eq(t)C
\newcommand\Ep{\overline{\mathcal{E}}} % corresponding curve in P^4
\newcommand\Eq{\hat{\mathcal{E}}}   % corresponding curve in Q \subset P^5
\newcommand\PP{\mathcal{P}}

\newcommand\Off{\mathop{\rm Off}}
\newcommand\Env{\mathop{\rm Env}}
\newcommand\Res{\mathop{\rm Res}}
\newcommand\sign{\mathop{\rm sign}}
\newcommand\dia{\lozenge}
\newcommand\pl{Pl$\ddot{\mbox{u}}$cker\ } % Pluecker

\newcommand\Ec{\mathcal{E}}           % rational spine curve in R^4
\newcommand\Gc{\mathcal{G}}
\newcommand\Gm{\Gamma}
\newcommand\Cc{\mathcal{C}}   % canal surface C
\newcommand\Sc{\mathcal{S}}   % for surface S_M
\newcommand\Qc{\mathcal{Q}}   % Lie quadric Q
\newcommand\Vc{\mathcal{V}}   % the dual variety
\newcommand\Hc{\mathcal{H}}
\newcommand\Mc{\mathcal{M}}

\newcommand\ee{{\mathcal{E}}}  % the same as \Ec, we use it in section 7   % e=(e_1,e_2,e_3,e_4)
\newcommand\eep{\overline{e}}   % \overline{e}=(e_0,e_1,e_2,e_3,e_4)

\newcommand\G{G}                % affine impl. equation for offsets
\newcommand\Gp{\overline{G}}    %projective impl. equation for offsets and Gamma hypersurface
\newcommand\Gq{\hat{G}}

\newcommand\F{F}
\newcommand\Hh{H}

\newcommand\V{\Vc(\Eq)}       % the dual variety to the curve

\newcommand\yy{y}                % variables y=(y_1,...,y_4) corresponding to R^4
\newcommand\yyp{\overline{y}}    % variables y=(y_0,y_1,...,y_4) corresponding to P^4
\newcommand\yyq{\hat{y}}         % variables y=(u,y_0,y_1,...,y_4) corresponding to Q \subset P^5

\newcommand\Phip{{\Phi}}   % projective extension of \phi

\theoremstyle{plain}
\newtheorem{thm}{Theorem}
\newtheorem{lem}[thm]{Lemma}
\newtheorem{cor}[thm]{Corollary}
\newtheorem{conjecture}[thm]{Conjecture}
\newtheorem{prop}[thm]{Proposition}

\theoremstyle{definition}
\newtheorem{defn}[thm]{Definition}
\newtheorem{rem}[thm]{Remark}
\newtheorem{assum}[thm]{Assumption}
\newtheorem{exmp}[thm]{Example}

\title{The implicit equation of a canal surface\footnote{This is the accepted authors' manuscript of the text. The article is to be published in the Journal of Symbolic Computation, see http://dx.doi.org/10.1016/j.jsc.2008.06.001 for more information.}}

\author{Marc Dohm, Severinas Zube}

\maketitle

\begin{abstract}
\noindent A canal surface is an envelope of a one parameter family of spheres.
In this paper we present an efficient algorithm for computing the
implicit equation of a canal surface generated by a rational family
of spheres. By using Laguerre and Lie geometries, we relate the
equation of the canal surface to the equation of a dual variety of
a certain curve in 5-dimensional projective space. We define the $\mu$-basis for arbitrary dimension and give a simple algorithm for
its computation.
This is then applied to the dual variety, which allows us to deduce
the implicit equations of the the dual variety, the canal surface and
any offset to the canal surface.\\

\noindent \textit{Key words:} canal surface, implicit equation, resultant, $\mu$-basis, offset
\end{abstract}

\section{Introduction}

In surface design, the user often needs to perform rounding or
filleting between two intersecting surfaces. Mathematically, the
surface used in making the rounding is defined as the envelope of
a family of spheres which are tangent to both surfaces. This
envelope of spheres centered at $c(t)\in\R^3$ with radius $r(t)$, where
$c(t)$ and $r(t)$ are rational functions, is called a canal surface with spine curve
$\Ec=\{(c(t),r(t))\in\R^4 | t \in \R \}$. If the radius $r(t)$ is constant the
surface is called a pipe surface. Moreover, if additionally we
reduce the dimension (take $c(t)$ in a plane and consider circles
instead of spheres) we obtain the offset to the curve. Canal
surfaces are very popular in Geometric Modelling, as they can be
used as a blending surface between two surfaces. For example, any
two circular cones with a common inscribed sphere can be blended
by a part of a Dupin cyclide bounded by two circles as it was
shown by \cite{Pratt90,Pratt95}
(see Figure \ref{fig:cyclide}).
Cyclides are envelopes of special quadratic families of spheres.
For other examples of blending with canal surfaces we refer to
\cite{margarita}.

%%%%%%%%%%%%%%%%%%%%%%%%%%%%%%%%%%%%%%%%
\begin{figure}
\begin{center}
\epsfxsize=3.5cm  \epsfbox{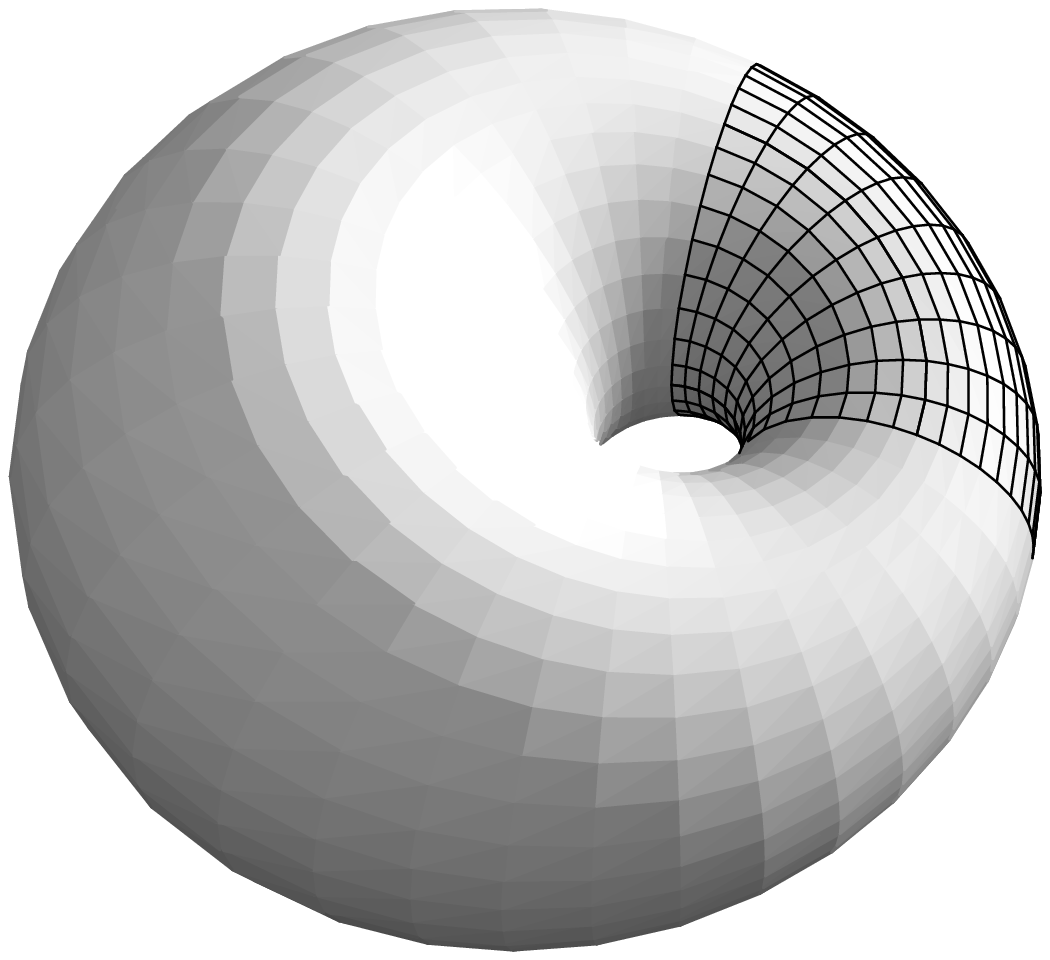} \hspace{1cm}
\epsfxsize=3cm \epsfbox{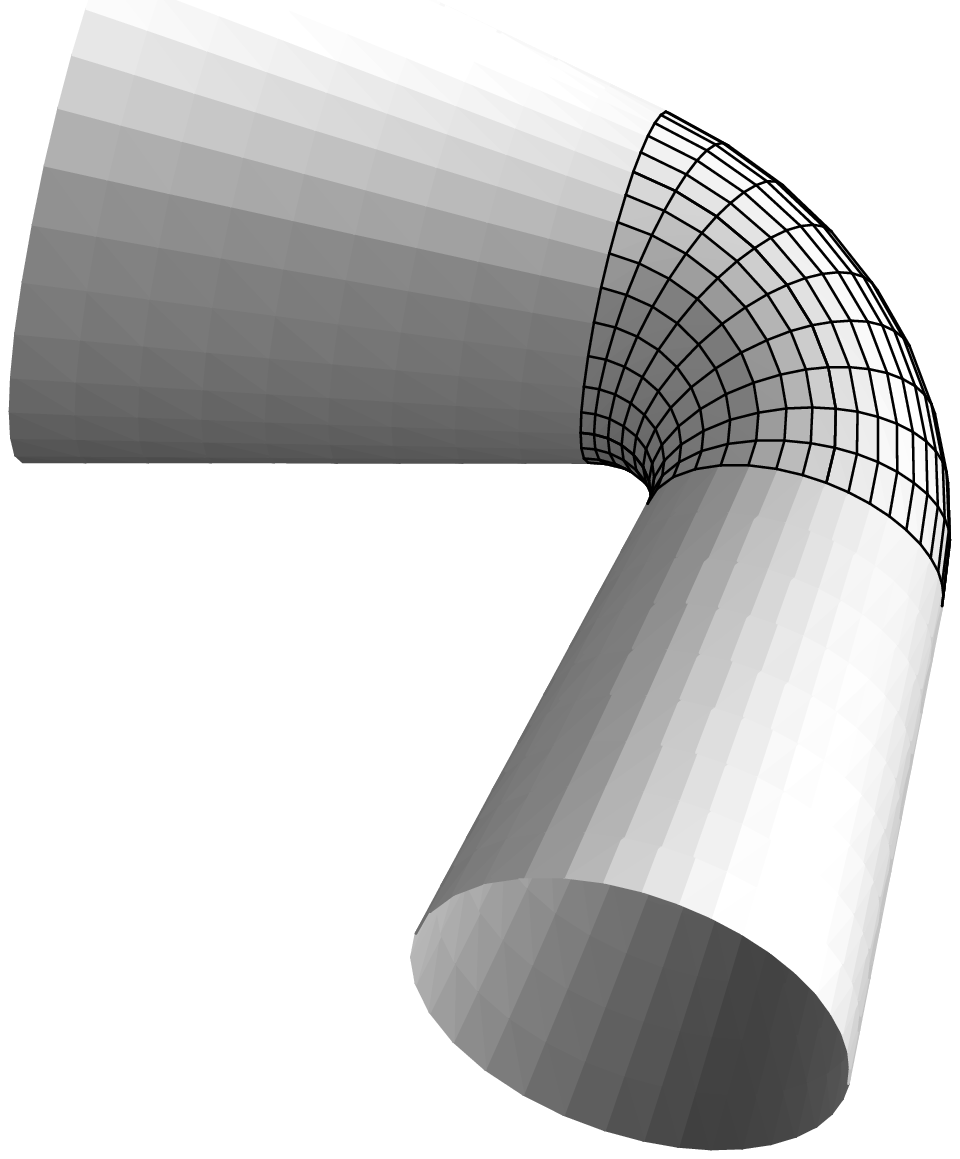}
\caption{\label{fig:cyclide} A Dupin cyclide used for blending
circular cones.}
\end{center}
\end{figure}
%%%%%%%%%%%%%%%%%%%%%%%%%%%%%%%%%%%%%%%%

Here we study the implicit equation of a  canal surface $\Cc$ and
its implicit  degree. The implicit equation of a canal surface can
be obtained after elimination of the family variable $t$ from the
system of two equations $g_1(\yy,t)=g_2(\yy,t)=0$ (here $g_1,g_2$
are quadratic in the variables
 $\yy=(y_1,y_2,y_3,y_4)$), i.e. by
taking the resultant with respect to $t$. However, this resultant
can have extraneous factors. In the paper we explain how these
factors appear and how we can eliminate them. By using Lie and
Laguerre geometry, we see that the above system of equations  is
related to a system $h_1(\yyq,t)=h_2(\yyq,t)=Q(\yyq)=0$, where
$h_1,h_2$ are linear in the variables
$\yyq=(u,y_0,y_1,y_2,y_3,y_4)$ and $Q(\yyq)$ is the Lie quadric
(for the exact definition see formula (\ref{liequadric})). It
turns out that the variety defined by the system of equations
$h_1(\yyq,t)=h_2(\yyq,t)=0$ is a dual variety to the curve $\Eq
\in \bp^5$, where $\Eq$ is a curve on the Lie quadric determined
by the spine curve $\Ec$ (for the explicit definition see formula
(\ref{bar_e})). For the dual variety $\V$ we define the
$\mu$-basis, which consists of two polynomials
$p_1(\yyq,t),p_2(\yyq,t)$ which are linear in $\yyq$, and of
degree $d_1,d_2$ in $t$ are such that $d_1+d_2$ is minimal. It
turns out that the  resultant of $p_1$ and $p_2$ with respect to
$t$ gives the implicit equation of the variety $\V$. There is a
simple substitution formula (see the algorithm  at the end of
section \ref{dualvariety}) to compute the implicit equation of the
canal surface from the implicit equation of the variety $\V$.

Partial  solutions to the problem of finding the implicit equation (and degree) for canal
surfaces have been given in other papers. For instance, the degree of offsets to curves is
studied in \cite{ss}. In \cite{xu},
there is a degree  formula for the implicit equation of a
polynomial canal surface. Quadratic canal surfaces (parametric and
implicit representation) have been studied in \cite{KZ}.

We close the introduction by noting that the implicit degree of a
canal surface is  important for the parametric degree. Our
observation is that if the canal surface has the minimal
parametrization of bi-degree $(2,d)$ then its implicit degree is
close to $2d$. On the minimal    bi-degree $(2,d)$
parametrizations of the canal surface we refer to \cite{kra}.

The paper is organized as follows.  In the next
section we develop some algebraic formalism about modules with two
quasi-generators.  We define the $\mu$-basis for these modules and
present an algorithm for its computation. In the following section,
we recall some needed facts about Lie and Laguerre sphere geometry. Then using Lie and
Laguerre geometry we describe the canal surface  explicitly. Also,
we introduce the $\Gm$-hypersurface which contains all $d$-offsets
to the canal surface.  Using the $\mu$-basis algorithm  we compute
the implicit equations of the dual variety $\V$, the
$\Gm$-hypersurface  and the canal surface $\Cc$. Next we apply the
results of the previous section to the dual variety $\V$ of the
curve  and explain how to compute the implicit degree
of the  $\Gm$-hypersurface (without computation of the implicit
equation). Finally, we give some computational examples.

\section{Modules with two quasi-generators and the $\mu$-basis.} \label{quasigens}
\vskip0.1cm

Let $\R[t]$ be polynomial ring over the field of real numbers, and
denote $\R[t]^d$ the $R$-module of $d$-dimensional row vectors with
entries in $\R[t]$. Let $\R(t)$ be the field of rational functions
in $t$. For a pair of vectors $A=(A_1,A_2,\ldots,A_d)$,$B=(B_1,B_2,...,B_d) \in\R[t]^d$ the set
\begin{equation}\label{module}
M=\langle A,B \rangle=\{aA+bB\in\R[t]^d\ |\
a,b\in\R(t),A,B\in\R[t]^d\}\subset\R[t]^d
\end{equation} is the $\R[t]$-module with two polynomial \textit{quasi-generators} $A,B$. Here, we
assume that $A,B$ are $\R[t]$-linearly independent, i.e. $aA+bB=0$
with $a,b\in\R[t]$ if and only if $a=b=0$.

\noindent \textbf{Remark:} Note that the vectors $A,B$ may not be
generators of the module $M$ over $\R[t]$ because $a$ and $b$ in the
definition (\ref{module})  are from the field $\R(t)$
of rational functions. For example, if $A=p
D$ with $p\in\R[t],D\in\R[t]^d$ and $\deg p > 0$ then $A,B$ are not
generators of the module $M$.\\

\noindent For  $A=(A_1,A_2,\ldots,A_d)$,$B=(B_1,B_2,...,B_d) \in\R[t]^d$
we define the \pl coordinate vector $A \wedge B$ as follows:
\begin{equation}
    A\wedge B=([1,2],[1,3],...,[d-1,d])\in\R[t]^{d(d-1)/2},\ \mbox{where}\
    [i,j]=A_iB_j-A_jB_i.  \nonumber
\end{equation}
In other words, $A \wedge B$ is the vector of $2$-minors of the matrix
$$W_{A,B}=\begin{pmatrix} A_1 & A_2 &  \cdots & A_d \cr B_1
& B_2 &  \cdots & B_d \end{pmatrix}$$ and we denote by
$\deg(A\wedge B)=\max_{i,j}\{\deg(A_iB_j-A_jB_i)\}$ the degree of
the \pl coordinate vector, i.e. the maximal degree of a 2-minor of
$W_{A,B}$.

Let a polynomial vector $A\in\R[t]^d$ be presented as
\begin{equation}%\label{}
    A=\sum_{i=0}^n \alpha_it^i,\ \alpha_i\in\R^d,\ i=0,...,n;\ \alpha_n\not=0. \nonumber
\end{equation}
We denote the leading vector $\alpha_n$ by $LV(A)$ and the degree of
$A$ by $\deg A=n$.

  Note that if $LV({A})$ and $LV({B})$ are linearly independent over $\R$ then $\deg A\wedge B
=\deg A+\deg B$
 and $LV(A\wedge B)=LV(A)\wedge LV(B)$. We define
 $$\deg M=\min\{\deg
(\tilde A\wedge \tilde B)\ |\ \mbox{ }\  \tilde A,\tilde B
\in\R[t]^d \ \ \mbox{such that}\  \langle \tilde A,\tilde B \rangle=M
\}$$
 to be the degree of the module $M$ with two
 quasi-generators.\medskip

\begin{defn}
 Two quasi-generators $\tilde A,\tilde B$ of the module $M=\langle
A,B\rangle$ are called a $\mu$-basis of the module $M$ if $\deg
M=\deg\tilde A +\deg \tilde B$.
\end{defn}

 As we always have the inequality $\deg
( A\wedge  B) \leq \deg A +\deg B$, this means in particular that
the sum $\deg\tilde A +\deg \tilde B$ is minimal. A $\mu$-basis always
exists, as we shall see at the end of the section. Let us explain
the geometric motivation behind this definition.

\begin{rem} By abuse of notation, we will continue to denote parameters $t$, however in
the geometric definitions that follow, they should be understood
as parameters $(t:s) \in \bp^1$ and polynomials in $\R[t]$ should
be thought of as homogenized with respect to a new variable~$s$.
\end{rem}

We define the following subspace of $\R^d$ for the module $M=\langle A,B\rangle$.
     $$L(M,t_0) =\{x\in\R^d\ |\ C(t_0)\cdot x=0 \ \mbox{for all}\ C\in M\}$$
where $C(t)=(C_1(t),C_2(t),\ldots,C_d(t)) \in \R[t]^d$, $x=(x_1,x_2,\ldots,x_d)^T$ and $C(t) \cdot x=x_1C_1(t)+x_2C_2(t)+\ldots+x_dC_d(t)$.  We have the inequality $\dim L(M,t_0)\geq d-2$, because the module $M$ has
only two quasi-generators. In fact, we have 
$\dim L(M,t_0)= d-2$ for all $t_0$, as we will see in Proposition \ref{mu-basis-prop}.2.
Whenever two vectors $A(t_0)$ and
$B(t_0)$ are linearly independent in $\R^d$ then
$L(M,t_0)$ is the intersection of two hyperspaces $\{x\in\R^d |\
A(t_0)\cdot x =0\}$ and $\{x\in\R^d\ |\  B(t_0)\cdot x =0\}$.

Using those subspaces, we can associate a hypersurface $\Sc_{M}$ in the real projective space
$\bp^{d-1}=\bp(\R^d)$ with the module $M$

\begin{equation}\label{druled}
    \Sc_{M}:=\bigcup_t \bp(L(M,t))\subset\bp^{d-1}.
\end{equation}

Note that this definition and the definition of $L(M,t_0)$ depend only on the module $M$ and
not on the choice of quasi-generators. It is useful to compare the hypersurface $\Sc_M$ with the
hypersurface $\Sc_{A,B}$ defined as
\begin{equation}\label{druledAB}
    \Sc_{A,B}:=\bigcup_t (\{A(t)\cdot x \} \cap \{ B(t)\cdot x \})\subset\bp^{d-1}
\end{equation}
where $ A, B$ are quasi-generators of $M$. By definition, this is the variety
defined by $\Res{}_t(A(t)\cdot x,B(t)\cdot x)$ and it is clear that
$\Sc_{M}\subset \Sc_{A,B}$. If the vectors $A(t_0),B(t_0)$
are linearly dependent, then $(\{ A(t_0)\cdot x \} \cap \{B(t_0)\cdot x \})\subset\R^d$ is a subspace of codimension one.  Note that
in this case the implicit equation $\Res{}_t(A(t)\cdot x,B(t)\cdot x)$ contains the factor $A(t_0) \cdot x$.
As a matter of fact, this happens if and only if $W_{A,B}(t_0)$ has rank one, which is equivalent to
saying that $t_0$ is a zero of the ideal generated by the \pl coordinates.

In fact, we will see in Proposition \ref{s_m} that this phenomenon does not occur
for $\mu$-bases, i.e. if $\tilde{A},\tilde{B}$ is a $\mu$-basis of the module $M$ then
$\Sc_{M}=\Sc_{\tilde{A},\tilde{B}}$ and there are no extraneous factors as before.

\begin{rem}
We should explain why we use the term $\mu$-basis. The above
definition is a generalization of the usual definition for  the
$\mu$-basis of a rational ruled surface (as in \cite{csc},
\cite{czs} or \cite{dohm}). They coincide in the special case
$d=4$. $M$ is the analogue of the syzygy module (i.e. the module
of moving planes following the parametrization of the ruled
surface) and the subspaces  $L(M,t)$, which in this case are
2-dimensional and hence define projective lines, are exactly the
family of lines which constitute the ruled surface. Similarly, the
case $d=3$ corresponds at the theory of $\mu$-bases for rational
curves and our definition is equivalent to the usual definition as
in \cite[Theorem 3, Condition 3]{cw}.

However, the approach used here is actually inverse to the approach in the cited papers. In the latter
the ruled surface is defined by a parametrization and then the module of moving planes is studied, whereas
here we fix a module that ``looks like'' such a moving plane module and then study the (generalized) ruled surface
that corresponds to it. Note that by  definition of the subspaces $L(M,t)$ any element $C$ of $M$ can be considered a moving plane following
$\Sc_M$, in the sense that for all $x \in \Sc_M$ there is a parameter $t$ such that $ C(t)\cdot x=0$.

Note that $A \wedge B$ defines the so-called Pl\"ucker curve $\mathcal P$ in $\bp^{d(d-1)/2 -1}$ by
$$\begin{array}{rcc}
  \varphi_\mathcal{P}:   \bp^1  & \dashrightarrow& \bp^{d(d-1)/2 -1}
      \nonumber \\
     t &\mapsto& ([1,2]:[1,3]:...:[d-1,d])
\end{array}$$
where $[i,j]=A_iB_j-A_jB_i$. We will denote $k=\deg
\varphi_\mathcal{P}$ the degree of the parametrization, which is
the cardinality of the fiber of a generic point in the image of
$\varphi_\mathcal{P}$. Note that $\varphi_\mathcal{P}$ and $k$ are
the same for any choice of quasi-generators of $M$.
\end{rem}

\begin{prop}\label{degreeofSM}
For any pair of quasi-generators $A,B$ of $M$ we have the degree formula
$$k \cdot \deg \Sc_M  %= \deg {S}_M % Moreover, we have $\deg M
=\deg (A\wedge B)-\deg q_{A,B},$$ where $q_{A,B}=\gcd(A\wedge B)$
and $k=\deg \varphi_\mathcal{P}$. Moreover, we have $\deg
\mathcal{P}=\deg \Sc_M$.
\end{prop}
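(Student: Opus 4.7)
The plan is to prove the two identities
\begin{equation*}
k \cdot \deg \mathcal{P} \;=\; \deg(A \wedge B) - \deg q_{A,B} \qquad \text{and} \qquad \deg \Sc_M \;=\; \deg \mathcal{P}
\end{equation*}
separately; combining them yields the proposition.

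The first identity is the standard degree formula for the image of a rational parametrization of a projective curve. The coordinates $[i,j](t) = A_iB_j - A_jB_i$ of $\varphi_\mathcal{P}$ share the common factor $q_{A,B}$; after removing it, the primitive parametrization has polynomials of maximal degree $\deg(A \wedge B) - \deg q_{A,B}$. Since $\varphi_\mathcal{P}$ is generically $k$-to-$1$ onto $\mathcal{P}$, this maximal degree equals $k \cdot \deg \mathcal{P}$, giving the first identity.

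For the second identity I would compute $\deg \Sc_M$ by intersecting with a generic line $\ell = \bp(\mathrm{span}(p,q)) \subset \bp^{d-1}$. By Proposition \ref{mu-basis-prop}.2 (assumed granted), $L(M,t)$ is a genuine $(d-2)$-plane for every $t$, and the primitive quantities $[i,j](t)/q_{A,B}(t)$ are its honest Plücker coordinates; thus $\mathcal{P}$ is the image in the Grassmannian $G(d-2,d)\cong G(2,d)$ of the family of subspaces $L(M,t)$, and $\Sc_M$ is the scroll that they sweep out in $\bp^{d-1}$. The incidence $\bp(L(M,t)) \cap \ell \ne \emptyset$ is equivalent to $L(M,t) \cap \mathrm{span}(p,q) \ne \{0\}$, which by the wedge-product criterion becomes a single linear equation in the Plücker coordinates of $L(M,t)$ whose coefficients are determined by $p \wedge q$. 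Thus the parameters $t$ for which $\bp(L(M,t))$ meets $\ell$ are precisely the preimages under $\varphi_\mathcal{P}$ of a hyperplane section of $\mathcal{P}$. For generic $\ell$ this hyperplane cuts $\mathcal{P}$ in exactly $\deg \mathcal{P}$ points, each corresponding to one $(d-3)$-plane of the family that contributes a single intersection point to $\Sc_M \cap \ell$. Hence $\deg \Sc_M = \deg \mathcal{P}$.

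The main obstacle is the genericity analysis in the second step: one must verify that $\ell$ can be chosen so that the associated Plücker hyperplane meets $\mathcal{P}$ transversely and only at smooth points, that no two distinct fibers $\bp(L(M,t))$ and $\bp(L(M,t'))$ contribute coincident intersections with $\ell$, and that $\ell$ avoids any positive-dimensional degeneracies of the scroll. Because all of these are open conditions on the Grassmannian of lines in $\bp^{d-1}$, a generic $\ell$ satisfies them simultaneously and the count above is exact.
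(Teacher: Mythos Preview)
Your approach is essentially the same as the paper's: both compute $\deg \Sc_M$ by intersecting with a generic line $\ell=\bp(\mathrm{span}(p,q))$, reduce the incidence condition to the single equation $(p\wedge q)\cdot(A(t)\wedge B(t))=0$, and identify its solutions with a generic hyperplane section of the Pl\"ucker curve $\mathcal{P}$; the paper writes out this determinant explicitly, while you invoke the wedge-product incidence criterion, but the content is identical.

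One caveat: your appeal to Proposition~\ref{mu-basis-prop}.2 is circular, since that proposition is proved via Lemma~\ref{mubadeg}, which in turn relies on the present degree formula. Fortunately the forward reference is unnecessary---for the degree count it suffices that $L(M,t)$ has codimension~$2$ for \emph{generic} $t$, which follows immediately from the $\R[t]$-linear independence of $A$ and $B$; the finitely many degenerate parameters correspond to roots of $q_{A,B}$ and are already accounted for when you pass to the primitive Pl\"ucker coordinates.
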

\begin{proof}
The proposition and the proof are similar to  Lemma 1 in
\cite{czs} and to Theorem 5.3 in \cite{PPR98}.

 The implicit degree of the  hypersurface $ \Sc_{A,B}$ is the
 number of intersections between a generic line and the  hypersurface.
 The generic line $L(s)$  is defined by two points in the space
  $L(s)=H_0+ s H_1$, where
 $H_i=(h_{i1},h_{i2},...,h_{id}),i=0,1$.
 The line $L(s)$ intersects the hyperplane $\{A(t)\cdot x \}$ if and only if $H_0\cdot A(t)+sH_1\cdot
 A(t)=0$. Since the line $L(s)$ should intersect the hyperplane $\{ B(t)\cdot x \}$ too, we see
 that the  implicit degree is the number of intersections of two curves in the $(t,s)$ plane:
    $$H_0\cdot A(t)+sH_1\cdot A(t)=0,$$ 
    $$H_0\cdot B(t)+sH_1\cdot B(t)=0.$$ 
Eliminating $s$ from the above equation we have
\begin{eqnarray}\label{ww}
\left|\begin{array}{cc}
    H_0\cdot A(t) & H_1\cdot A(t)\\
    H_0\cdot B(t)& H_1\cdot B(t)
    \end{array}\right|=(H_0\wedge H_1)\cdot (A(t)\wedge
    B(t))=0,
\end{eqnarray}
where  $C\cdot D$ means a standard scalar product of two vectors
$C,D\in\R^{d(d-1)/2}$. The number of solutions of \eqref{ww} is
the number of intersection points of the Pl\"ucker curve with a
generic hyperplane in $\bp^{d(d-1)/2 -1}$, so $\deg \mathcal{P}=\deg \Sc_M$.

 Now it is known (see for example \cite[Theorem 1]{dohm}) that $$k \cdot \deg \mathcal{P}  =\deg (A\wedge B)-\deg q_{A,B}$$ and
the proposition follows.\end{proof}

We have yet to show the existence of the $\mu$-basis. To this end, we propose
 an algorithm for its computation,
the basic idea of which is to reduce $q_{A,B}=\gcd(A\wedge B)$ to a
constant using the so-called Smith form of the $2 \times d$ matrix
$$W_{A,B}=\begin{pmatrix} A_1 & A_2 &  \cdots & A_d \cr B_1
& B_2 &  \cdots & B_d \end{pmatrix}$$ and then render the leading
vectors linearly independent by a simple degree reduction. The
Smith form is a decomposition $W_{A,B}=U \cdot S \cdot V$, with
unimodular $U \in \mathbb{R}[t]^{2 \times 2}$, $V \in
\mathbb{R}[t]^{d \times d}$, and
$$S= \begin{pmatrix} 1 & 0 & 0 & \cdots & 0 \cr 0
& q_{A,B} & 0 & \cdots & 0 \end{pmatrix} \in \mathbb{R}[t]^{2 \times d}$$
It always exists and can be computed efficiently by standard computer algebra systems.\\

\textbf{Algorithm ($\mu$-basis)}
\begin{enumerate}
\item INPUT: Quasi-generators $A=(A_1,A_2,\ldots,A_d)$,$B=(B_1,B_2,...,B_d) \in\R[t]^d$ of the
module $M$

 \item Set $$W_{A,B}=\begin{pmatrix} A_1 & A_2 &  \cdots & A_d \cr B_1
& B_2 &  \cdots & B_d \end{pmatrix}.$$

 \item Compute a Smith form $$W_{A,B}=U \cdot \begin{pmatrix} 1 & 0 & 0 & \cdots & 0 \cr 0
& q_{A,B} & 0 & \cdots & 0 \end{pmatrix} \cdot V$$ with unimodular $U \in
\mathbb{R}[t]^{2 \times 2}$,$V \in \mathbb{R}[t]^{d \times d}$.

\item Set $W'$ to be the $2 \times d$-submatrix consisting of the
first two rows of $V$. \item If the vector of leading terms (with
respect to the variable $t$) of the first row is $h$ times the one
of the second row, $h \in \R[t]$, set $W':= \begin{pmatrix} 1 & -h \cr 0 & 1 \end{pmatrix} \cdot
W'$.

\item If the vector of leading terms (with respect to the variable
$t$) of the second row is $h$ times the one of the first row, $h
\in \R[t]$, set $W':= \begin{pmatrix} 1 & 0 \cr -h & 1 \end{pmatrix} \cdot W'$.

\item If the preceding two steps changed $W'$ go back to Step 5.

\item Set $\tilde A,\tilde B$ to be the rows of $W'$.

\item OUTPUT: A $\mu$-basis $\tilde A,\tilde B$ of the module $M$
\end{enumerate}
\textbf{ }\\
As we shall see in Section \ref{dualvariety}, the case we are
interested in is the case $d=6$, so we are dealing with very small
matrices and the computations are extremely fast. Note that we
actually only need the first two rows of $V$, so  we could optimize
the algorithm by modifying the Smith form algorithm used as not to
compute the unnecessary entries of the matrices $U$ and $V$.
Generally, the number of elementary matrix operations in Step 5
and 6 is very low. In the worst case, it is bounded by the maximal
degree of the entries of the matrix $W'$ in Step 4 of the
algorithm, since each step reduces the maximal degree in one of
the rows of $W'$.

Next, we will show that the output of the above algorithm is a $\mu$-basis and that the resultant of a $\mu$-basis $\tilde A,\tilde B$ of the module $M=\langle A,B\rangle$
is an implicit equation of $\Sc_M$. In Section \ref{dualvariety}, we will use these results for
a special choice of $A$ and $B$ to compute the implicit equation of a canal surface.

\begin{lem} \label{mubadeg}
The output of the above algorithm is a $\mu$-basis and we have $k \cdot \deg \Sc_M=\deg M$, where
$k=\deg \varphi_\mathcal{P}$.
\end{lem}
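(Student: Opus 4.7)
The plan is to verify the three things packaged together in the lemma: (a) that after all steps the pair $\tilde A,\tilde B$ still generates $M$ in the quasi-generator sense, (b) that the gcd $q_{\tilde A,\tilde B}$ of its Pl\"ucker vector becomes a unit, and (c) that its leading vectors end up $\R$-linearly independent. Once (a)--(c) are in hand, the equality $k\cdot\deg \Sc_M=\deg M$ and the $\mu$-basis property both drop out of Proposition~\ref{degreeofSM} by a single minimality comparison.

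First I would analyze the Smith-form step. Writing $W_{A,B}=U\cdot S\cdot V$ with $\det U\in\R^{*}$, and letting $\tilde A,\tilde B$ denote the first two rows of $V$, the identity $W_{A,B}=U\cdot S\cdot V$ reads
\[
A=U_{11}\tilde A + U_{12}\,q_{A,B}\,\tilde B,\qquad B=U_{21}\tilde A + U_{22}\,q_{A,B}\,\tilde B,
\]
so $A,B\in\langle \tilde A,\tilde B\rangle$ over $\R(t)$, and conversely $\tilde A,\tilde B$ lie in $M$ because $U^{-1}\in\R[t]^{2\times 2}$ lets us solve for $\tilde A$ and $q_{A,B}\tilde B$ in terms of $A,B$. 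Thus $\langle \tilde A,\tilde B\rangle=M$. Taking Pl\"ucker vectors, the bilinear exterior product and the block structure of $S$ give
\[
A\wedge B \;=\; q_{A,B}\,\det(U)\,\tilde A\wedge \tilde B,
\]
so $\tilde A\wedge \tilde B$ equals $(A\wedge B)/q_{A,B}$ up to a nonzero scalar; in particular $\gcd(\tilde A\wedge \tilde B)$ is a unit. This establishes (a) and (b) after Step~4.

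Next I would check that Steps~5--7 preserve the module and the unit gcd, because each is left-multiplication of $W'$ by a unimodular $2\times 2$ matrix over $\R[t]$ with determinant~$1$. The sole purpose of this loop is to force $LV(\tilde A)$ and $LV(\tilde B)$ to be $\R$-linearly independent. If they were not, and say $\deg\tilde A\ge\deg\tilde B$ with $LV(\tilde A)=h_0\,LV(\tilde B)$ for some $h_0\in\R$, then the leading-term vector of $\tilde A$ would be $h\cdot LV(\tilde B)$ for the scalar monomial $h=h_0 t^{\deg\tilde A-\deg\tilde B}$, triggering Step~5 and strictly decreasing $\deg\tilde A$. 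Since the total degree $\deg\tilde A+\deg\tilde B$ is a nonnegative integer that strictly decreases at each non-trivial pass, the loop terminates; at termination $LV(\tilde A)$ and $LV(\tilde B)$ are $\R$-linearly independent, so the paragraph preceding the definition of $\deg M$ gives $\deg(\tilde A\wedge \tilde B)=\deg\tilde A+\deg\tilde B$.

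Finally I combine with Proposition~\ref{degreeofSM}. Because $q_{\tilde A,\tilde B}$ is a unit, that proposition yields $k\cdot\deg\Sc_M=\deg(\tilde A\wedge \tilde B)=\deg\tilde A+\deg\tilde B$. For any other quasi-generators $A',B'$ of $M$ the same proposition gives $\deg(A'\wedge B')=k\cdot\deg\Sc_M+\deg q_{A',B'}\ge k\cdot\deg\Sc_M$, so the minimum in the definition of $\deg M$ is attained by $\tilde A,\tilde B$ and equals $k\cdot\deg\Sc_M$. This simultaneously proves $\deg M=k\cdot\deg\Sc_M$ and shows that $\tilde A,\tilde B$ is a $\mu$-basis. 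The only delicate point in the whole argument, and the one I would spell out most carefully, is verifying that the leading-vector reduction in Steps~5--6 is indeed always executable when $LV(\tilde A),LV(\tilde B)$ are dependent and that the required $h$ lies in $\R[t]$; this is exactly where the algorithm would fail without the preceding Smith-form normalization.
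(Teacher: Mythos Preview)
Your argument is correct and follows exactly the same route as the paper's proof: establish that the output $\tilde A,\tilde B$ quasi-generates $M$, has $\gcd(\tilde A\wedge\tilde B)$ a unit, and has $\R$-independent leading vectors, then feed this into Proposition~\ref{degreeofSM} and a minimality comparison. The paper compresses your parts (a)--(c) into the single phrase ``by construction it is clear''; you have simply unpacked that phrase, which is entirely appropriate. One small quibble: your closing remark that the leading-vector reduction ``would fail without the preceding Smith-form normalization'' is not quite right---Steps~5--6 work on any pair of quasi-generators; the Smith step is needed only for (b), not for the executability of the reduction loop.
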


\begin{proof}
Let $\tilde A(t),\tilde B(t)$ be the output of the above algorithm.
By construction it is clear that $\tilde A(t),\tilde B(t)$ are
quasi-generators of $M$ and that $\tilde q_{A,B} = \gcd(\tilde A\wedge \tilde
B)=1$. Furthermore, we have $\deg(\tilde A \wedge \tilde B) =\deg(\tilde
A)+\deg(\tilde{B})$, because the vectors of leading terms of $\tilde
A(t)$ and $\tilde B(t)$ are linearly independent. So by
Proposition~\ref{degreeofSM} we deduce
\begin{eqnarray}
k \cdot \deg \Sc_M &=  &  \deg(\tilde A \wedge \tilde B) - \deg(\tilde q) \nonumber \\
&=& \deg(\tilde A \wedge \tilde B) \nonumber \\
& = & \deg(\tilde A)+\deg(\tilde B) \nonumber
\end{eqnarray}
Moreover, by definition we have $\deg M \leq \deg(\tilde A \wedge
\tilde B)$ and if $A,B$ are quasi-generators such that $\deg (A
\wedge B)$ is minimal, the degree formula gives $\deg(\tilde A
\wedge \tilde B) = \deg( A \wedge  B) - \deg( q) \leq \deg M$,
which shows that $k \cdot \deg \Sc_M=\deg M$, and as a consequence
that $\tilde A(t),\tilde B(t)$ is indeed a $\mu$-basis.
\end{proof}

\begin{prop}\label{s_m}
Let $\tilde A(t),\tilde B(t)$  be a  $\mu$-basis of $M$ and $x=(x_1,x_2,\ldots,x_d)^T$ variables. Then
$$\Res{}_t(\tilde{A}(t)\cdot x,\tilde{B}(t)\cdot x)=F_{\Sc_M}^k$$ where $F_{\Sc_M}$ is the implicit
equation of the hypersurface $\Sc_M$.
\end{prop}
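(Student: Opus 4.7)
The plan is to prove the identity by combining a Sylvester degree count with an analysis of the zero locus of the resultant. Set $R(x) := \Res{}_t(\tilde A(t)\cdot x, \tilde B(t)\cdot x)$. Since each $\tilde A(t)\cdot x$ and $\tilde B(t)\cdot x$ is linear in $x$, the Sylvester matrix defining $R$ has entries linear in $x$, so $R$ is a homogeneous polynomial in $x$ of degree $\deg\tilde A + \deg\tilde B$. By the $\mu$-basis hypothesis and Lemma~\ref{mubadeg}, this degree equals $\deg M = k\cdot\deg\Sc_M$, which is precisely $\deg F_{\Sc_M}^k$.

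Next I would pin down the zero locus of $R$. By the standard property of resultants, $R(x_0) = 0$ iff $\tilde A(t)\cdot x_0$ and $\tilde B(t)\cdot x_0$ share a common root in $t$, iff $x_0 \in \Sc_{\tilde A, \tilde B}$. The proof of Lemma~\ref{mubadeg} established that $\gcd(\tilde A\wedge\tilde B) = 1$, so $W_{\tilde A,\tilde B}(t_0)$ has rank $2$ for every $t_0$; equivalently, $\tilde A(t_0)$ and $\tilde B(t_0)$ are linearly independent for every $t_0$, and $L(M, t_0)$ coincides with $\{x : \tilde A(t_0)\cdot x = \tilde B(t_0)\cdot x = 0\}$ throughout, so no extraneous hyperplane factor is picked up. As noted in the discussion preceding this proposition, this gives $\Sc_{\tilde A,\tilde B} = \Sc_M$ set-theoretically.

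To conclude, I would use that $\Sc_M$ is irreducible, being the image under the second projection of the irreducible incidence variety $\{(t,x)\in\bp^1\times\bp^{d-1} : \tilde A(t)\cdot x=\tilde B(t)\cdot x=0\}$, a $\bp^{d-3}$-bundle over $\bp^1$ by the rank-$2$ property just established. Hence $F_{\Sc_M}$ is an irreducible polynomial of degree $\deg\Sc_M$, and the zero-locus identification combined with the Nullstellensatz forces $R = c\cdot F_{\Sc_M}^a$ for some nonzero scalar $c$ and positive integer $a$. The degree match $a\cdot\deg\Sc_M = \deg R = k\cdot\deg\Sc_M$ then gives $a = k$, proving the proposition (up to the nonzero scalar $c$, which is all that the statement can mean since the Sylvester resultant has no canonical normalization aligning it with $F_{\Sc_M}$).

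The main step that requires care is the identification $\Sc_{\tilde A, \tilde B} = \Sc_M$ without extraneous components, and this is exactly where the $\mu$-basis hypothesis is used, through $\gcd(\tilde A\wedge\tilde B)=1$; the remainder is a degree-plus-irreducibility argument. An alternative route, avoiding explicit irreducibility, would be to restrict $R|_L$ to a generic line $L(s)=H_0+sH_1$ and observe, via the computation in Proposition~\ref{degreeofSM}, that each of the $\deg\Sc_M$ transverse intersection points of $L$ with $\Sc_M$ pulls back to exactly $k$ distinct values of $t$ under $\varphi_\mathcal{P}$, forcing $R|_L$ and $F_{\Sc_M}^k|_L$ to share the same zeros with the same multiplicities.
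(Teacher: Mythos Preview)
Your argument is correct and follows essentially the same route as the paper: both proofs establish irreducibility of the resultant's zero locus via the incidence variety in $\bp^1\times\bp^{d-1}$ (a projective bundle over $\bp^1$ thanks to the rank-$2$ property of $W_{\tilde A,\tilde B}$), then combine this with the degree identity $\deg\tilde A+\deg\tilde B=k\cdot\deg\Sc_M$ from Lemma~\ref{mubadeg} to pin down the exponent. You are a bit more explicit about invoking the Nullstellensatz and about the unavoidable scalar ambiguity, and your justification of the rank-$2$ property via $\gcd(\tilde A\wedge\tilde B)=1$ avoids the paper's forward reference to Proposition~\ref{mu-basis-prop} (though strictly speaking Lemma~\ref{mubadeg} proves $\gcd=1$ only for the algorithm's output, so for an arbitrary $\mu$-basis you need the one-line deduction from $\deg M=k\cdot\deg\Sc_M$ combined with Proposition~\ref{degreeofSM}).
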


\begin{proof}
First, we will show in the same way as in \cite[Theorem 9]{dohm}
that $\Res{}_t(\tilde{A}(t)\cdot x,\tilde{B}(t)\cdot x)$ is geometrically
irreducible, i.e. the power of an irreducible polynomial.
As we shall see in Proposition \ref{mu-basis-prop}, the intersection of the
hyperplanes $\{\tilde{A}(t)\cdot x \}$ and $\{\tilde{B}(t)\cdot x \}$ is of
codimension 2 for any parameter $t \in \bp^1$. So the incidence variety
$$\mathcal W = \{ (t,x) \in \bp^1 \times \bp^{d-1} \vert \tilde{A}(t)\cdot x= \tilde{B}(t)\cdot x =0 \} \subset \bp^1 \times \bp^{d-1}$$
is a vector bundle over $\bp^1$ and hence irreducible. So the projection on $\bp^{d-1}$ is irreducible as well and
its equation, which is by definition the hypersurface defined by $\Res{}_t(\tilde{A}(t)\cdot x,\tilde{B}(t)\cdot x)$, is a
power of an irreducible polynomial.

As we have remarked earlier, the resultant of two quasi-generators
is always a multiple of the implicit equation of $\Sc_M$, so $\Res{}_t(\tilde{A}(t)\cdot x,\tilde{B}(t)\cdot x)$
is a power of $F_{\Sc_M}$.

 But using the degree property above we see  $$ \deg(\Res{}_t(\tilde{A}(t)\cdot x,\tilde{B}(t)\cdot x)) =\deg(\tilde A)+\deg(\tilde B)=k \cdot \deg \Sc_M$$ which implies that $\Res{}_t(\tilde A(t)\cdot x,\tilde
B(t)\cdot x)$ equals $F_{\Sc_M}^k$.
\end{proof}

\begin{rem}
It is known that the Pl\"ucker curve $\mathcal P$ can be properly reparametrized,
i.e. there exists a rational function $h$ of
degree $k$ such that $A \wedge B = C \circ h$, where $C$ is a proper parametrization of $\mathcal P$.
It is tempting to use
this proper reparametrization in order to represent the implicit
equation $F_{\Sc_M}$ of $\Sc_M$ directly as a resultant as in the proof of
\cite[Theorem 3]{dohm}. However,
$h$ does not necessarily factorize $A$ and $B$, i.e. it is not sure
that there exist $A'$ and $B'$ with $A=A'\circ h$ and $B=B'\circ h$, which would be needed to do this.
\end{rem}

In the following we present some properties of  $\mu$-bases. Note
that the properties in Propositions~\ref{mu-basis-prop},~\ref{mu-basis-prop-2}
are  similar  to \cite{cw} Theorems 1,3. However, we give different proofs by deducing them from the
degree formula and Lemma~\ref{mubadeg}.

\begin{prop}\label{mu-basis-prop}
Let $M=\langle A,B\rangle$ and let $\tilde{A},\tilde{B}$ be a $\mu$-basis
of the module $M$. Then the following properties hold: \\
1. The vectors $LV(\tilde{A}),LV(\tilde{B})$ are linearly independent.\\
2. $\tilde{A}(t_0),\tilde{B}(t_0)$ are linearly independent over
$\mathbb{C}$ for any parameter value $t_0 \in \mathbb{C}$.
\end{prop}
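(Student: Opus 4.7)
The plan is to pin down the two degree identities that characterize a $\mu$-basis and then read off both claims as direct consequences.

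For Part~1, I would use the elementary inequality $\deg(U \wedge V) \leq \deg U + \deg V$, valid for any $U, V \in \R[t]^d$, with equality precisely when the leading vectors $LV(U)$ and $LV(V)$ are $\R$-linearly independent. By the definition of a $\mu$-basis, $\deg \tilde A + \deg \tilde B = \deg M$, and by the definition of $\deg M$ as a minimum, $\deg M \leq \deg(\tilde A \wedge \tilde B)$. Sandwiching gives
$$\deg M \leq \deg(\tilde A \wedge \tilde B) \leq \deg \tilde A + \deg \tilde B = \deg M,$$
so both inequalities are in fact equalities. The right-hand equality is exactly the statement that $LV(\tilde A)$ and $LV(\tilde B)$ are linearly independent, which is Part~1.

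For Part~2, I would combine Part~1 with the degree formula from Proposition~\ref{degreeofSM} and Lemma~\ref{mubadeg}. Together they give
$$\deg(\tilde A \wedge \tilde B) = \deg M = k \cdot \deg \Sc_M = \deg(\tilde A \wedge \tilde B) - \deg q_{\tilde A, \tilde B},$$
forcing $q_{\tilde A, \tilde B} = \gcd(\tilde A \wedge \tilde B)$ to be a nonzero constant. If $\tilde A(t_0)$ and $\tilde B(t_0)$ were $\C$-linearly dependent for some $t_0 \in \C$, the matrix $W_{\tilde A, \tilde B}(t_0)$ would have rank at most one, so every $2 \times 2$ minor, i.e.\ every component of $\tilde A \wedge \tilde B$, would vanish at $t_0$. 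Then $(t-t_0)$ would divide $\gcd(\tilde A \wedge \tilde B)$ in $\C[t]$, contradicting that this gcd is a constant.

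I do not expect a real obstacle here: both parts reduce to the two degree identities characterising a $\mu$-basis, together with the standard rank-drop characterisation via vanishing of all $2 \times 2$ minors. The only subtlety worth flagging is the passage from $\R[t]$ to $\C[t]$ when exploiting $\gcd = 1$, but this is automatic since the gcd computed in $\R[t]$ remains (up to units) a gcd in the larger UFD $\C[t]$.
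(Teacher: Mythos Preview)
Your proof is correct and follows essentially the same strategy as the paper: both parts are deduced from the degree identities surrounding Proposition~\ref{degreeofSM} and Lemma~\ref{mubadeg}, together with the rank-drop characterisation via vanishing of the $2\times 2$ minors. The one notable difference is in Part~1: you use only the sandwich $\deg M \le \deg(\tilde A\wedge\tilde B)\le \deg\tilde A+\deg\tilde B=\deg M$, which comes straight from the definitions and avoids Lemma~\ref{mubadeg} entirely, whereas the paper argues by contradiction through $k\cdot\deg\Sc_M<\deg M$. Your route here is slightly more elementary; Part~2 is essentially identical to the paper's argument.
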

\begin{proof}
1. If $LV(\tilde{A}),LV(\tilde{B})$ were linearly dependent, this would imply that
$k \cdot \deg \Sc_M=\deg(\tilde A \wedge \tilde B) - \deg(q_{\tilde A,\tilde B}) < \deg(\tilde A)+\deg( \tilde B) = \deg M$
which is a contradiction to Lemma~\ref{mubadeg}. \\
2. Suppose that $\tilde{A}(t_0),\tilde{B}(t_0)$ are linearly dependent
for some $t_0 \in \mathbb{C}$. This is equivalent to saying that
the matrix $W_{\tilde A,\tilde B}$ is not
of full rank, which means that all 2-minors vanish. So
$t_0$ is a root of $q_{\tilde A,\tilde B}$ and as above
we deduce $k \cdot \deg \Sc_M=\deg(\tilde A \wedge \tilde B) -
\deg(q_{\tilde A,\tilde B}) < \deg(\tilde A)+\deg( \tilde B) = \deg M$ which is again a
contradiction to Lemma~\ref{mubadeg}. \end{proof}

\begin{prop}\label{mu-basis-prop-2}
Let $M=\langle\tilde  A,\tilde B\rangle$ and  assume that
$\tilde{A},\tilde{B}$ satisfy conditions 1,2 from
Proposition~\ref{mu-basis-prop}. Then any element $D\in M$ has the
following expression: $D=h_1\tilde A+h_2\tilde B$ for some
$h_1,h_2\in\R[t]$, i.e. $\tilde{A},\tilde{B}$ are generators of the
module $M$ over the polynomial ring $\R[t]$. Moreover, the pair
$\tilde{A},\tilde{B}$ is a $\mu$-basis of the module $M$.
\end{prop}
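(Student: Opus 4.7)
The plan is to prove the two claims in order: first the generation statement, then use it to deduce the $\mu$-basis property. Given $D\in M$, the definition of $M$ produces $f,g\in\R(t)$ with $D = f\tilde A + g\tilde B$; clearing a common denominator $p\in\R[t]$ yields
\[ pD = h_1 \tilde A + h_2 \tilde B \]
for some $h_1,h_2\in\R[t]$, and the generation claim reduces to showing $p \mid h_1$ and $p \mid h_2$, after which we can simply divide to get $D \in \R[t]\cdot\tilde A + \R[t]\cdot\tilde B$.

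I would prove this divisibility by induction on $\deg p$. If $\deg p = 0$ there is nothing to do. Otherwise, fix an irreducible factor $q\in\R[t]$ of $p$ and a root $t_0\in\C$ of $q$. Evaluating the displayed identity at $t_0$ kills the right-hand side, giving
\[ h_1(t_0) \tilde A(t_0) + h_2(t_0) \tilde B(t_0) = 0. \]
Condition 2 of Proposition~\ref{mu-basis-prop} asserts $\C$-linear independence of $\tilde A(t_0),\tilde B(t_0)$, forcing $h_1(t_0) = h_2(t_0) = 0$. Irreducibility of $q$ then gives $q\mid h_1$ and $q\mid h_2$ in $\R[t]$. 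Cancelling $q$ from both sides (legitimate since $\R[t]^d$ is torsion-free over $\R[t]$) produces an equation $(p/q)\cdot D = (h_1/q) \tilde A + (h_2/q) \tilde B$ to which the inductive hypothesis applies.

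For the $\mu$-basis assertion, let $A', B'\in M$ denote the $\mu$-basis produced by the algorithm, whose existence is provided by Lemma~\ref{mubadeg}. Applying the generation property just proved to $A', B'$ yields a matrix $N\in\R[t]^{2\times 2}$ with $A' = N_{11}\tilde A + N_{12}\tilde B$ and $B' = N_{21}\tilde A + N_{22}\tilde B$. Bilinearity and alternation of the wedge give $A'\wedge B' = \det(N)\cdot(\tilde A \wedge \tilde B)$, where $\det(N)\neq 0$ since $A',B'$ are $\R[t]$-linearly independent quasi-generators. Taking degrees, $\deg(\tilde A\wedge \tilde B)\leq \deg(A'\wedge B') = \deg M$, the last equality coming from Lemma~\ref{mubadeg}. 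Condition 1 upgrades the left-hand side to $\deg\tilde A + \deg\tilde B \leq \deg M$, and the reverse inequality is immediate from the definition of $\deg M$ as a minimum. Hence $\deg M = \deg\tilde A + \deg\tilde B$, and $\tilde A, \tilde B$ is a $\mu$-basis.

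The main obstacle is making the divisibility argument respect multiplicities of $p$: a pointwise evaluation at a single root of $p$ only witnesses vanishing of $h_1, h_2$ to order one, whereas $p$ may have higher-order roots. The induction circumvents this by peeling off one irreducible factor at a time, and working with irreducible factors in $\R[t]$ (rather than linear factors over $\C$) treats real and non-real roots uniformly while keeping the computation inside $\R[t]$.
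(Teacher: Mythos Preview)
Your proof is correct. A tiny slip: when you evaluate $pD = h_1\tilde A + h_2\tilde B$ at $t_0$, it is the \emph{left}-hand side $p(t_0)D(t_0)$ that vanishes, not the right; the displayed conclusion is right, so this is purely verbal.

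For the generation claim, your argument and the paper's share the same core mechanism---evaluate at a root and invoke condition~2---but are organized differently. The paper writes $D=\frac{a}{b}\tilde A+\frac{c}{d}\tilde B$ with reduced fractions, multiplies through by $bd$, and argues via divisibility that $b=d$ must be constant. Your inductive peeling of irreducible factors is cleaner and handles multiplicities transparently, whereas the paper's divisibility chain (``$b$ divides neither $a$ nor $\tilde A$, hence $b\mid d$'') is a bit informal.

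For the $\mu$-basis claim the routes genuinely diverge. The paper argues directly: given any quasi-generators $P_1,P_2$, write $P_i=h_{i1}\tilde A+h_{i2}\tilde B$; condition~1 prevents leading-term cancellation, forcing $\deg P_1+\deg P_2\ge\deg\tilde A+\deg\tilde B$. This is self-contained. You instead import the $\mu$-basis $A',B'$ from Lemma~\ref{mubadeg}, use Part~1 to get a transition matrix $N$, and read off $\deg(\tilde A\wedge\tilde B)\le\deg(A'\wedge B')=\deg M$ from $A'\wedge B'=\det(N)\,(\tilde A\wedge\tilde B)$. Your route is slicker but depends on the algorithm's correctness; the paper's is more elementary and shows directly that conditions~1--2 alone force minimality of $\deg\tilde A+\deg\tilde B$ among all quasi-generator pairs.
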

\begin{proof}
Let $D \in M$, it can be expressed as
$$ D= \frac{a}{b} \tilde A + \frac{c}{d} \tilde B $$ with $a,b,c,d \in
\R[t]$ and co-prime numerators and denominators in the rational
functions $\frac{a}{b}$ and $\frac{c}{d}$. Furthermore, we may
assume that $\gcd(a,c)=1$, because if $\frac{D}{\gcd(a,c)}$ is a
linear combination of $\tilde{A},\tilde{B}$, then so is $D$.
Multiplying both sides of the above equation with $bd$ we obtain
$bd D= ad \tilde A + bc \tilde B$ or equivalently $b(d D-c \tilde B)= ad
\tilde A$ and since $b$ divides neither
 $a$ nor $\tilde A$ (if it divided $\tilde A$, for any root $t_0$ of $b$ and
any constant $\alpha$ we would deduce the relation $0=\alpha \tilde
A(t_0) + 0 \cdot \tilde B(t_0)$, which contradicts property 2 in
Proposition \ref{mu-basis-prop}), one concludes that $b$ divides
$d$ and by a symmetric argument that $d$ divides $b$, so we may
assume $b=d$. So we have $$ b D = a \tilde A + c \tilde B $$ and
plugging a root $t_0$ of $b$ into the equation, we would obtain a
non-trivial linear relation between $\tilde A$ and $\tilde B$,  again a contradiction to
Proposition \ref{mu-basis-prop}. This implies that $b$ and $d$ are
constant, which shows that any $D \in M$ can be expressed as
linear combination of $\tilde A$ and $\tilde B$ over $\R[t]$. In other
words: $\tilde A$ and $\tilde B$ are not only quasi-generators of $M$,
but actually generators in the usual sense, i.e. over $\R[t]$.

 Suppose that $\deg\tilde{A}\leq \deg\tilde{B}$ and let $M=\langle P_1,P_2 \rangle$.
Then we proved that $P_i=h_{i1} \tilde{A}+h_{i2}\tilde{B},i=1,2$ for
some polynomials $h_{ij}\in\R[t]$. Since $LV(\tilde{A}),LV(\tilde{B})$
are linearly independent  $LV(h_{i1}\tilde{A})$ and
$LV(h_{i2}\tilde{B}),i=1,2$ do not cancel each other. Therefore,
$\deg P_1\geq\deg\tilde B$ (if $h_{12}\not=0$) or $\deg
P_2\geq\deg\tilde B$ (if $h_{22}\not=0$). Also $\deg P_1\geq\deg\tilde
A$ and $\deg P_2\geq\deg\tilde A$. So, we see that $\deg P_1+\deg
P_2\geq\deg\tilde A+\deg\tilde B$, i.e. a pair $\tilde{A},\tilde{B}$ is a
$\mu$-basis of the module $M$.
\end{proof}

\section{Elements of Lie and Laguerre  Sphere Geometry}

Here we shortly recall the elements of Lie and  Laguerre Sphere
Geometry (cf. \cite{cecil,PP98,KM00}). We start from the
construction of Lie's geometry of oriented spheres and planes in
$\R^3$. Let $\textbf{p}\in \R^3$, $r\in\R$. The oriented sphere  $S_{\textbf{p},r}$ in $\R^3$ is the set
\begin{equation} 
    S_{\textbf{p},r}= \{\textbf{v}\in \R^3 | (\textbf{v}-\textbf{p})\cdot(\textbf{v}-\textbf{p})=r^2 \}, \nonumber
\end{equation}
where by $\textbf{v}\cdot \textbf{w}$ we denote the standard positive definite
scalar product in $\R^3$. The orientation is determined by the sign
of $r$: the normals are pointing outwards if $r > 0$. If $r=0$
then $S_{\textbf{p},0}=\{\textbf{p}\}$  is a point.
Let $\textbf{n}\in \R^3$ with $\textbf{n} \cdot \textbf{n}=1$ and $h\in\R$. The oriented plane $P_{\textbf{n},h}$ in $\R^3$ is the set
\begin{equation} %\label{plane}
    P_{\textbf{n},h} = \{\textbf{v}\in \R^3 | \textbf{v} \cdot \textbf{n}=h \}. \nonumber
\end{equation}
The {\sl Lie scalar} product with signature $(4,2)$ in $\R^6$ is
defined by the formula
 \begin{equation} % \label{lieproduct}
    [x,z]=\frac{-x_1z_2-x_2z_1}{2}+x_3z_3+x_4z_4+x_5z_5-x_6z_6.  \nonumber
\end{equation}
for $x=(x_1,\ldots,x_6)$ and $z=(z_1,\ldots,z_6)$. In matrix notation we have
\begin{equation}\label{lieproduct-matrix}
    [x,z]=xCz^T,\ \mbox{ where}\
    xC=(-x_2/2,-x_1/2,x_3,x_4,x_5,-x_6).
\end{equation}
 Denote $\yyq=(u:y_0:y_1:y_2:y_3:y_4) \in \bp(\R^6)=\bp^5$ and define the quadric
\begin{equation}\label{liequadric}
    \Qc=\{\yyq \in \bp^5 \ |\ [\yyq,\yyq]=-uy_0+y_1^2+y_2^2+y_3^2-y_4^2=0\}
\end{equation}
where $[.,.]$ is the obvious extension of the Lie scalar product to $\bp^5$.
$\Qc$ is called {\sl Lie quadric}.

We represent an oriented sphere $S_{\textbf{p},r}$ (or an oriented plane
$P_{\textbf{n},h}$) as a point $Lie(S_{\textbf{p},r})$ (resp. $Lie(P_{\textbf{n},h})$) on the Lie
quadric:
\begin{eqnarray} % \label{liepoint}
Lie(S_{\textbf{p},r})&=&(2(\textbf{p}\cdot \textbf{p} -r^2),2,2\textbf{p},2r) \in \Qc, \quad \textbf{p}\in\R^3,r\in\R , \nonumber \\
Lie(P_{\textbf{n},h})&=&(2h,0,\textbf{n},1) \in \Qc,\quad  \textbf{n}\in\R^3, \ h\in\R.   \nonumber
\end{eqnarray}

It is easy to see that we have determined a bijective
correspondence between the set of points on the Lie quadric $\Qc$ and
the set of all oriented spheres/planes in $\R^3$. Here we assume
that a point $q=(1:0:0:0:0:0) \in \Qc$ on the Lie quadric $\Qc$ corresponds
to an infinity, i.e. to a point in the compactification of $\R^3$. We
say that $q=(1:0:0:0:0:0)$ is the improper point on the Lie quadric.
Notice that  oriented planes in $\R^3$ correspond to points
$\Qc \cap T_q$, where $T_q=\{\yyq=(u:y_0:y_1:y_2:y_3:y_4) \in \bp^5 | y_0=0\}$ is
a tangent hyperplane to the Lie quadric at the improper point $q$.

Two oriented spheres $S_{\textbf{p}_1,r_1},S_{\textbf{p}_2,r_2}$ are in {\sl
oriented contact} if they are tangent and have the same
orientation at the point of contact. The analytic condition for
oriented contact is
\begin{equation} % \label{ac}
    \Vert \textbf{p}_1-\textbf{p}_2\Vert=|r_1-r_2|,  \nonumber
\end{equation}
where $\Vert \textbf{p}_1-\textbf{p}_2\Vert$ denotes the usual distance between two points in
the Euclidean space $\R^3$.  One can check directly that the analytical
condition of oriented contact on the Lie quadric is equivalent to
the equation
\begin{equation} % \label{contact}
    [Lie(S_{\textbf{p}_1,r_1}),Lie(S_{\textbf{p}_2,r_2})]=0.   \nonumber
\end{equation}

It is known that the Lie quadric contains projective lines but no
linear subspaces of higher dimension (Chapter 1, Corollary 5.2 in
\cite{cecil}). Moreover, the line in $\bp^5$ determined by two
points $k_1,k_2$ of $\Qc$ lies on $\Qc$ if and only $[k_1,k_2]=0$,
i.e. the corresponding spheres to $k_1,k_2$ are in an oriented
contact (Chapter 1, Theorem 1.5.4 in \cite{cecil}). The points on
a line on $\Qc$ form so called {\sl parabolic pencil} of spheres.
All spheres which correspond to a line on $\Qc$ are precisely the
set of all spheres in an oriented contact.

\begin{rem}
Here we use a slightly different coordinate system in Lie Geometry
than in the book \cite{cecil}.
 The scalar product as in \cite{cecil} may be obtained applying
 the following transformation:\\
$x_1'=(x_1+x_2)/2, x_2'=(x_2-x_1)/2, x_3'=x_3, x_4'=x_4, x_5'=x_5,
x_6'=x_6.$
\end{rem}

We  show now that the set of points $\yyq$ in $\Qc$ with $y_0\not= 0$ is
naturally diffeomorphic to the affine space $\R^4$. This
diffeomorphism is defined by the map
$$\begin{array}{lcc}
    \phi : \qquad \qquad \Qc \setminus T_q  & \rightarrow& \R^4,
      \nonumber \\
     (u:y_0:y_1:y_2:y_3:y_4) &\mapsto& \left( \frac{y_1}{y_0},\frac{y_2}{y_0},
    \frac{y_3}{y_0},\frac{y_4}{y_0}\right),
\end{array}$$
 where $T_q=\{\yyq=(u:y_0:y_1:y_2:y_3:y_4) \in \bp^5 \ |\ y_0=0\}$ as before, i.e. the tangent
 hyperplane to the Lie quadric $\Qc$ at the improper point
 $q=(1:0:0:0:0:0)$.
 Let $v=(v_1,v_2,v_3,v_4), w=(w_1,w_2,w_3,w_4) \in\R^4$ and  denote by
  \begin{equation}\langle v, w \rangle =
v_1 w_1+v_2 w_2+v_3 w_3 - v_4 w_4       \nonumber
\end{equation}
 the Lorentz scalar  product on
$\R^4$, which can be seen as the restriction of the Lie scalar product $[.,.]$ to $\R^4$.
The affine space $\R^4$ with the Lorentz scalar product is
called the Lorentz space and denoted by $\R^4_1$.

Let $\yy=(y_1,y_2,y_3,y_4) \in \R^4$. One can check that inverse
map of $\phi$ is given by the formula:
 \begin{equation} % \label{inverse}
    \phi^{-1}(\yy)=(\langle \yy,\yy \rangle,
1,\yy)  \in \Qc \setminus T_q  \nonumber
\end{equation}
\textbf{ }\\
 Notice, that $\phi(Lie(S_{\textbf{p},r}))=(\textbf{p},r)$, i.e. the sphere $S_{\textbf{p},r}\in\R^3$
corresponds to a point $(\textbf{p},r)\in\R^4_1$. The map $\phi$
can be extended to a linear projection $\Phi$ from $\Qc
\setminus\{q\}$ to $\bp^4$ defined as
$$\begin{array}{lcc}\label{prj}
    \Phip : \qquad \qquad \Qc \setminus \{q\}  & \rightarrow& \bp^4
      \nonumber \\
     (u:y_0:y_1:y_2:y_3:y_4) &\mapsto& (y_0:y_1:y_2:y_3:y_4)
\end{array}$$

The points of $\Qc\cap T_q$  can be represented as
$Lie(P_{\textbf{n},h})=(2h,0,\textbf{n},1)$ and these points
correspond to planes in $\R^3$. Note that
$$\Phip(Lie(P_{\textbf{n},h}))=(0,\textbf{n},1)\in
\Omega=\{y_0=0,y_1^2+y_2^2+y_3^2-y_4^2=0\}$$ are infinite points
to the natural extension of $\R^4$ to $\bp^4$ which correspond to
a pencil of parallel planes in $\R^3$. The quadric $\Omega$ is
called \textit{absolute quadric}. The preimage of the map $\Phip$ has the
following form
\begin{equation} \label{inversep}
    \Phip^{-1}(\yyp)=(\langle y,y\rangle:y_0^2:y_0y_1:y_0y_2:y_0y_3:y_0y_4)\in \Qc  \setminus \{q\}
\end{equation}
\textbf{ }\\
 where $\yyp=(y_0:y_1:y_2:y_3:y_4)\in \bp^4$ and $\yy=(y_1,y_2,y_3,y_4)$ as before.

 A direct computation shows that for $v,w\in\R^4$
\begin{equation}\label{twoproducts}
-2[\phi^{-1}(v),\phi^{-1}(w)]=\langle v-w,v-w\rangle
\end{equation}
The formula shows that two oriented spheres defined by $v$, $w$
(i.e.  spheres $S_{(v_1,v_2,v_3),v_4}$ and
$S_{(w_1,w_2,w_3),w_4}$) are in oriented contact if and only if
 $\langle v-w, v-w \rangle = 0$.

 Let us define two maps: an embedding $i_d:\R^3\to\R^4,i_d(\textbf{p})=(\textbf{p},d),d\in\R$ and a projection
 $\pi:\R^4\to\R^3,\pi(\textbf{p},r)=\textbf{p}$, where $r\in\R$. We will
 treat points $i_0(\R^3)$ as spheres with zero radius and identify
 them with $\R^3$. All interrelations between the spaces introduced above can be described in the following diagram
\begin{equation}\label{diagram}
    \begin{array}{ccccccc}
       &  & \Qc\setminus T_q & \subset & \Qc\setminus \{q\} & \subset & \bp^5 \\
       &  & \downarrow\phi &  &\downarrow \Phip &  &  \\
      \R^3 & \stackrel{i_d}{\ \to} & \R^4 & \subset & \bp^4 &  &  \\
      \| &  &  \downarrow\pi&  &  &  &  \\
      \R^3 & = & \R^3 &  &  &  &  \\
    \end{array}
\end{equation}

%%%%%%%%%%%%%%%%%%%%%%%%%%%%%%%%%%%%%%%%%%%%%%%%%%%%%%%%%%%%%%%%%%%%%%%%%%%%%%%%%%%%

\begin{defn} For an oriented surface (curve or point)
$\Mc\subset\R^3$ define an {\it isotropic hypersurface}
$\Gc(\Mc)\subset\bp^4$ as the union of all points in $\R^4$ which
correspond to oriented tangent spheres of $\Mc$. Let
$\Gc_d(\Mc)=\Gc(\Mc)\cap\{y_4=dy_0\}$ be a variety which
corresponds to tangent spheres with radius $d$ of $\Mc$. The set
$\Env_d(\Mc)=\pi(\Gc_d(\Mc)|_{\R^4})\subset\R^3$ are centers
of spheres with radius $d$ tangent to $\Mc$. The set
$\Env_d(\Mc)$ is called $d$-envelope of the variety $\Mc$.
Since $\Gc(\Mc)=\bigcup_d\Gc_d(\Mc)$ we can treat the isotropic
hypersurface $\Gc(\Mc)$ as the union of all $d$-envelope to the
variety $\Mc$.
\end{defn}

If $ y,a\in\R^4,\ a_0,y_0\in\R$, we define a function
\begin{eqnarray}\label{g}
    g((a_0:a),(y_0:y))&=&\langle ay_0-a_0y, ay_0-a_0y \rangle=
    y_0^2 a_0^2 \left< \frac{a}{a_0}-\frac{y}{y_0}, \frac{a}{a_0}-\frac{y}{y_0} \right>\\
    &=&y_0^2\langle a,a\rangle -2a_0y_0\langle
a,y\rangle + a_0^2 \langle y,y\rangle. \nonumber \
\end{eqnarray}
 Let $(y_0:y)$ be such that $g((a_0:a),
(y_0:y))=0$. By the formula (\ref{twoproducts}) we see that
spheres
$S_{\left(\frac{a_1}{a_0},\frac{a_2}{a_0},\frac{a_3}{a_0}\right),\frac{a_4}{a_0}}$
and
$S_{\left(\frac{y_1}{y_0},\frac{y_2}{y_0},\frac{y_3}{y_0}\right),\frac{y_4}{y_0}}$
are in oriented contact. Therefore, in the same manner   as
previously, we define the isotropic hypersurface $ \Gc((a_0:a))$
as follows
$$ \Gc((a_0:a))=\{(y_0,y)\in\bp^4\ |\ g( (a_0:a), (y_0:y)) = 0\}
\subset \bp^4.$$ In fact,  $ \Gc((a_0:a))$ is a quadratic cone
with a singular point at a vertex $(a_0:a)\in\bp^4$ and may be
viewed as the set of all spheres which touches the fixed sphere
$S_{\left(\frac{a_1}{a_0},\frac{a_2}{a_0},\frac{a_3}{a_0}\right),\frac{a_4}{a_0}}\
 .$
 After the restriction to the
linear subspace $y_4=dy_0$ this hypersurface consists of all
spheres with radius $d$ which are in oriented contact with the
sphere
$S_{\left(\frac{a_1}{a_0},\frac{a_2}{a_0},\frac{a_3}{a_0}\right),\frac{a_4}{a_0}}$
which we denote  as
 $\Gc_d((a_0:a))=\Gc((a_0:a))\cap\{y_4=dy_0\}.$ We notice that
 $\Gc_d((a_0:a))|_{y_0=1}$ is defined by the equation
  $ (a_1-a_0y_1)^2+(a_2-a_0y_2)^2+(a_3-a_0y_3)^2=(a_4-a_0d)^2$,
 i.e.
 \begin{eqnarray*}\label{gg}
\pi(\Gc_d((a_0:a))|_{\R^4})&=&S_{\left(\frac{a_1}{a_0},\frac{a_2}{a_0},\frac{a_3}{a_0}\right),\frac{a_4-a_0d}{a_0}}=
\text{Env}_{-d}\left(S_{\left(\frac{a_1}{a_0},\frac{a_2}{a_0},\frac{a_3}{a_0}\right),\frac{a_4}{a_0}}\right)
\quad\text{and}\\
\Gc_d((a_0:a))|_{\R^4}&=&i_d\left(S_{\left(\frac{a_1}{a_0},\frac{a_2}{a_0},\frac{a_3}{a_0}\right),\frac{a_4-a_0d}{a_0}}\right)
\end{eqnarray*}
Therefore, in this case, the isotropic hypersurface $\Gc((a_0:a))$
may be treated as a union  all envelopes to the sphere
$S_{\left(\frac{a_1}{a_0},\frac{a_2}{a_0},\frac{a_3}{a_0}\right),\frac{a_4}{a_0}}\
.$  In the next section we generalize the definition of the
isotropic hypersurface $\Gc(\Mc)$ for a curve $\Mc$ in $\R^4$
(or  $\bp^4$).

All lines in $\R^4_1$ with directional vectors $v$ can be
classified into three types depending on the sign of $ \langle v,v
\rangle$: $(+)$-lines, $(0)$-lines (also called \emph{isotropic}
lines), and $(-)$-lines.

\section{The isotropic hypersurface and $d$-envelopes}\label{canal}

In this section, we will see that the definition of the canal surface is not obvious and
and we will introduce some geometrical object related to it.
A canal surface is given by a so-called \textit{spine curve} $\Ec$, which is the closed image (with respect to the
Zariski topology) of a rational map
$$\begin{array}{lcc}\label{eee}
     \R  & \dashrightarrow& \R^4
      \nonumber \\
     t &\mapsto& \left(\frac{e_1(t)}{e_0(t)},\frac{e_2(t)}{e_0(t)},\frac{e_3(t)}{e_0(t)},\frac{e_4(t)}{e_0(t)}\right)
\end{array}$$
with polynomials $e_0(t),\ldots,e_4(t) \in \R[t]$ such that $n=\max_{i=0,.,4}\{\deg(e_i(t))\}$. For
abbreviation, we usually skip the variable $t$ in the notations.
The spine curve describes a family of spheres
$\{S_{\left(\frac{e_1(t)}{e_0(t)},\frac{e_2(t)}{e_0(t)},\frac{e_3(t)}{e_0(t)} \right),\frac{e_4(t)}{e_0(t)}}
\ | \ t \in \R \}$ whose centers are given by the first three coordinates
$\left(\frac{e_1(t)}{e_0(t)},\frac{e_2(t)}{e_0(t)},\frac{e_3(t)}{e_0(t)} \right)$
and whose radii are given by the last coordinate~$\frac{e_4(t)}{e_0(t)}$. Intuitively, the canal surface is the envelope
of this family of spheres, but there are some subtleties to consider before we can make a precise definition.

We can also consider the spine curve as a projective curve $\Ep$ given as the closed image of a
parametrization
$$\begin{array}{lcc}
     \bp^1  & \dashrightarrow& \bp^4
      \nonumber \\
     t &\mapsto& (e_0(t):e_1(t):e_2(t):e_3(t):e_4(t))
\end{array}$$
with the non-restrictive condition $\gcd(e_0,\ldots,e_4)=1$, which
means that there are no base-points (i.e. parameters for which the
map is not well-defined).

Note that in this case the polynomials $e_i$ are actually to be
considered as homogenized to the same degree $n$ with respect to a
new variable $s$. As there is a one-to-one correspondence between
the univariate polynomials of a certain degree and their
homogeneous counterparts, we will keep the notation from above and
distinguish between the affine and projective case only where it
is necessary to avoid confusion.

In the following we use the notations
$$e=(e_1,e_2,e_3,e_4),\ \yy=(y_1,y_2,y_3,y_4),$$
$$\eep=(e_0:e_1:e_2:e_3:e_4),\ \yyp=(y_0:y_1:y_2:y_3:y_4).$$
 We first
proceed to define a hypersurface in $\bp^4$ which is closely
related to the canal surface.

\begin{defn}
The {\it isotropic hypersurface} $\Gc(\Ep)=\{\yyp\ |\ G(\yyp)=0 \}
\subset \bp^4$ associated with the (projective) spine curve $\Ep$
is the variety in $\bp^4$ defined by the polynomial
$\G(\yyp)=\Res{}_t(g_1,g_2)$ where
\begin{eqnarray*} % \label{Fsystem}
g_1(\yyp,t) & = &g(\eep,\yyp)= (e_0y_1-e_1y_0)^2+(e_0y_2-e_2y_0)^2+\\
& & \qquad \qquad (e_0y_3-e_3y_0)^2-(e_0y_4-e_4y_0)^2 \\
            &=& \langle e_0y-y_0e, e_0y-y_0e \rangle  =  e_0^2 \langle y,y\rangle -2\langle
e_0e,y_0y\rangle +y_0^2\langle e,e\rangle, \\
g_2(\yyp,t) &= &\frac{\partial
 g_1(\yyp,t)}{\partial t} =
 2(e_0e_0'\langle y,y\rangle-\langle (e_0e)',y_0y\rangle + y_0^2\langle e',e\rangle).
\end{eqnarray*}
So, we define  $\Gc(\Ep)$ as the envelope of the family of
isotropic hypersurfaces $\Gc(\eep)=\Gc((e_0(t):e(t))$.

\end{defn}

In the previous section we showed that
$\Gc_d(\eep)|_{\R^4}=\Env_{-d}\left(S_{\left(\frac{e_1}{e_0},\frac{e_2}{e_0},\frac{e_3}{e_0}\right),\frac{e_4}{e_0}}\right)$.
This interpretation leads to the following definition.

\begin{defn}
The \textit{$d$-envelope} associated with the (projective) spine curve $\Ep$ is defined
as the hypersurface $\Env_d(\Ep) \subset \bp^3$
 given by the implicit equation
 $$\G_d(y_0,y_1,y_2,y_3)=\Res{}_t(g_1|_{y_4=-dy_0} ,g_2|_{y_4=-dy_0})=\Res{}_t(g_1,g_2)|_{y_4=-dy_0},$$
i.e. the equation obtained by replacing $y_4$ in $\G(\yyp)$ by $-dy_0$, where $d \in \R$.

The \textit{affine envelope} $\Env_d(\Ec)$ at distance $d$ is the
restriction of $\Env_d(\Ep)$ to the affine space $\R^3$, defined
by the equation $\G_d |_{y_0=1}=\Res{}_t(g_1,g_2)|_{y_4=-dy_0,
y_0=1}$, i.e. by setting $y_0=1$.
\end{defn}

So $\Gc(\Ep)$ contains all offsets associated with the spine curve $\Ep$. Indeed, the surface % we have
\begin{equation} %\label{env}
{\Env}_{d}(\Ep) = \Gc(\Ep) \cap \{ y_4 = -dy_0 \}  \nonumber
\end{equation}
is a hyperplane section of $\Gc(\Ep)$, which can be interpreted as a parametrization
of all offsets (with respect to the parameter $y_4$).

The special case $d=0$ is particularly important.
For the real part of $\Env_0(\Ec)$ to be non-empty, one has to suppose that
$\Ec$ has tangent $(+)$-lines almost every\-where, or equivalently that $\langle e,e \rangle > 0$ almost
everywhere. $\Env_0(\Ec)$ is the
envelope of the family of spheres in $\R^3$ given by the spine curve $\Ec$ and
$\Env_d(\Ec)$ is the envelope of the same family of spheres with radii augmented by $d$. For instance,
circular cylinders or circular cones (call them just \emph{cones}) are envelopes
$\Env_0(\mathcal{L})$ of $(+)$-lines $\mathcal{L}$ and vice versa.
In the literature, the canal surface $\Cc$ is usually defined as this envelope $\Env_0(\Ec)$.
However, we will show in an example that these envelopes can contain ``unwanted'' extraneous factors, which are geometrically counterintuitive.

\begin{exmp}
Consider the spine curve $\Ec$ given by
$$\left(\frac{e_1(t)}{e_0(t)},\frac{e_2(t)}{e_0(t)},\frac{e_3(t)}{e_0(t)},\frac{e_4(t)}{e_0(t)}\right)=
\left(\frac{1-t^2}{1+t^2},\frac{2t}{1+t^2},0,\frac{1}{2}\right).$$
The first three coordinates describe a circle in the plane and moving spheres of constant radius along this curve,
so intuitively the
envelope should be a torus $\mathcal{T}$. But it turns out that the implicit equation of $\Env_0(\Ec)$
is up to a constant computed as
$$\G_0=\Res{}_t(g_1,g_2)|_{y_4=0, y_0=1}=(y_1^2+y_2^2)^2(4y_1^2+4y_2^2+4y_3^2+8y_1+3) F_\mathcal{T}$$
where $F_\mathcal{T}$ is indeed the equation of the torus. To
understand where the other factors come from, consider the
following: For a given parameter $t$, the equations $g_1$ and
$g_2$ define spheres $\mathcal{S}_1(t)$ and $\mathcal{S}_2(t)$ in
$\R^3$ and $$\Env{}_0(\Ec)=  \bigcup_t \mathcal{S}_1(t) \cap
\mathcal{S}_2(t)$$ of the intersections of these spheres (actually
this is nothing else than the geometric definition of the
resultant). Now, while for almost all $t$ this intersection is a
transversal circle on the torus (often called characteristic
circle in the literature), it can happen that the spheres
degenerate either to planes or to the whole space. In our example,
for the parameters $t=i$ and $t=-i$ we have $g_1(i)=g_1(-i)=0$,
$g_2(i)=-iy_1+y_2$ and $g_2(-i)=iy_1+y_2$, so the intersection in
those parameters actually degenerates to (complex) planes which
correspond to the factor $(-iy_1+y_2)(iy_1+y_2)=y_1^2+y_2^2$. In
the parameter $t=\infty$, both $g_1$ and $g_2$ define the same
sphere whose equation $4y_1^2+4y_2^2+4y_3^2+8y_1+3$ is the other
extraneous factor. This kind of phenomenon can also happen for
real parameter values, but it is interesting to remark that even
though we consider a real parametrization, non-real parameters can
interfere with the envelope, because the resultant ``knows'' about
them.

This example shows that $\Env_0(\Ec)$ is not a suitable definition for the canal surface
$\Cc$ and we will later develop one that avoids the
kind of extraneous components we have observed.
\end{exmp}

\begin{rem} Sometimes, in the literature, $\Env_0(\Ec)$ is defined
in affine space as  the resultant
\begin{eqnarray*}%\label{result0}
    \check{\G}_0(y_1,y_2,y_3)&=&\Res{}_t(\check{g}_1(y_1,{y_2},y_3,t),\check{g}_2(y_1,y_2,y_3,t)),
    \ \mbox{where}\\ \check{g}_1&=&e_0^2\check{f}_1,\quad\quad \check{g}_2=e_0^3\check{f}_2,\\
\check{f}_1&=&\left(y_1-\frac{e_1}{e_0}\right)^2+\left(y_2-\frac{e_2}{e_0}\right)^2
+\left(y_3-\frac{e_3}{e_0}\right)^2-\left(\frac{e_4}{e_0}\right)^2,\\
\check{f}_2&=&\frac{\partial\hat{f}_1}{\partial t}
\end{eqnarray*}
or in other words by deriving the affine equation of the sphere
after the substitutions $y_4=0,y_0=1$ and homogenizing afterwards.
Note that in this case $\check{f}_2$ and $\check{g}_2$ are linear
in $y_1,y_2,y_3$. Let $\tilde{f}_1=g_1|_{y_4=0, y_0=1}$ and
$\tilde{f}_2=g_2|_{y_4=0, y_0=1}$. An easy computation shows that
we have the following equalities
\begin{equation*}%\label{}
    \check{g}_1=\tilde{f}_1,\quad
    \check{g}_2=e_0\tilde{f}_2-2e_0'\tilde{f}_1.
\end{equation*}
Therefore, by  properties of the resultant
(\ref{res-product}),(\ref{res-divisor}), we have
\begin{eqnarray*}%\label{}
\Res{}_t(\check{g}_1,\check{g}_2)&=&\Res{}_t(\tilde{f}_1,e_0\tilde{f}_2-2e_0'\tilde{f}_1)\\
&=& \Res{}_t(\tilde{f}_1,e_0\tilde{f}_2)\\
&=&\Res{}_t(\tilde{f}_1,e_0)\cdot
\Res{}_t(\tilde{f}_1,\tilde{f}_2).
\end{eqnarray*}
Hence, we have $\check{\G}_0=\Res{}_t(\tilde{f}_1,e_0)\cdot
{\G}_0$, so there are even more extraneous factors than before due to the
roots of $e_0$.
\end{rem}
%%%%%%%%%%%%%%%%%%%%%%%%%%%%%%%%%%%%%%%%%%%%%%%%%%%%%%%%%%%%%%%%%%%%%%%

\subsection*{Linearizing the problem}

The main idea to understand and eliminate the extraneous
components that appeared in the example is to linearize the
equations $g_1$ and $g_2$ by replacing the quadratic term $\langle
\yy,\yy \rangle$ by a new variable $u$ (or more precisely $uy_0$
to keep the equations homogeneous). This will make the results
developed in Section 2 applicable. Geometrically, this means that
we will pull back the spine curve to $\Qc$ via the correspondence
$\Phip$.

For a spine curve $\Ec \in\R^4_1$ we define a {\sl proper} pre-image
$\Eq$  in the Lie quadric $\Qc$ as the closure of the set
$\Eq=\Phi^{-1}(\Ec)$ in $\Qc$.
It is immediate by \eqref{inversep} that the parametrization of $\Eq$ is
\begin{equation} \label{bar_e}
 \begin{array}{lcc}
     \bp^1  & \dashrightarrow& \Qc\subset\bp^5
       \\
     t &\mapsto& (\langle
e(t),e(t) \rangle:e_0^2(t):e_0(t)e_1(t):e_0(t)e_2(t):e_0(t)e_3(t):e_0(t)e_4(t))
\end{array}
\end{equation}
We can now define the envelopes associated with this new spine curve as follows.
\begin{defn}
The variety $\Hc(\Eq) \subset \bp^5$ associated with $\Eq$
is the hypersurface in $\bp^5$ defined by the implicit equation $\Hh(\yyq)=\Res{}_t(h_1,h_2)$ where
$\yyq=(u:y_0:y_1:y_2:y_3:y_4)$ and
\begin{eqnarray} % \label{Fsystem}
\label{Hsystem}
    h_1(\yyq,t)&=&-2[\yyq,\Eq(t)]=u
e_0^2+y_0\langle e,e\rangle-2\langle
e_0e,y\rangle,  \nonumber \\
    h_2(\yyq,t)&=& \frac{\partial h_1(\yyq,t)}{\partial t} =-2[\yyq,
    {\Eq}'(t)]=2(u
e_0e_0'+y_0\langle e',e\rangle-\langle
(e_0e)',y\rangle). \nonumber
\end{eqnarray}

Similarly, the variety $\Hc_d(\Eq) \subset \bp^4$ is defined by the implicit equation $$\Hh_d(u,y_0,y_1,y_2,y_3)=\Res{}_t(h_1|_{y_4=-dy_0} ,h_2|_{y_4=-dy_0})=\Res{}_t(h_1,h_2)|_{y_4=-dy_0},$$
i.e. the equation obtained by replacing $y_4$ in $\Hh(\yyp)$ by $-dy_0$, where $d \in \R$.
\end{defn}

Of course this is nothing else than substituting $\langle y,y \rangle$
in $g_1$ and $g_2$ by $uy_0$ and dividing by $y_0$, so $g_i(\yyp)=h_i(\langle
y,y\rangle,y_0^2,y_0y),i=1,2$, i.e. $g_i=h_i\circ
\Phi^{-1},i=1,2$. Now as an immediate corollary we obtain

\begin{prop} \label{Phi}With the notations as above we have 
\begin{equation}%\label{}
\G(\yyp)=\Hh(\langle y,y\rangle,y_0^2,y_0y),\  \mbox{i.e.}\ \G=\Hh \circ
\Phi^{-1},
\end{equation} and
\begin{equation}%\label{}
\G_d(y_0,y_1,y_2,y_3)=\Hh_d(y_1^2+y_2^2+y_3^2-d^2y_0^2,y_0^2,y_0y_1,y_0y_2,y_0y_3).
\end{equation}
\end{prop}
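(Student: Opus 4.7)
The plan is to deduce both identities from the pointwise observation, already recorded just before the statement, that $g_i(\yyp,t) = h_i(\langle y,y\rangle, y_0^2, y_0 y_1, y_0 y_2, y_0 y_3, y_0 y_4, t)$ for $i=1,2$. First I would verify this identity by substituting dummy arguments $u = \langle y,y\rangle$, $z_0 = y_0^2$, $z_j = y_0 y_j$ into the explicit formulas for $h_1$ and $h_2$ and comparing term by term with the definitions of $g_1$ and $g_2$; this reduces to an elementary algebraic check, visible already from the expansion $g_1 = \langle e_0 y - y_0 e,\, e_0 y - y_0 e\rangle$.

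Given this pointwise identity, the first claim $\G = \Hh \circ \Phi^{-1}$ will follow from the general fact that the resultant with respect to $t$ commutes with substitutions in the remaining variables. Writing $\sigma$ for the substitution $(u, z_0, z_1, z_2, z_3, z_4) \mapsto (\langle y,y\rangle, y_0^2, y_0 y_1, y_0 y_2, y_0 y_3, y_0 y_4)$, the Sylvester matrix of $(g_1, g_2)$ in $t$ is the image under $\sigma$ of the Sylvester matrix of $(h_1, h_2)$ in $t$, provided the degrees in $t$ are preserved. The main (and really only) thing to check is this degree preservation: with $n = \max_i \deg e_i$ one has $\deg_t h_1 = 2n$ and $\deg_t h_2 = 2n-1$, and $\sigma$ sends the leading $t$-coefficient of $h_1$ (which contains the summand $u \cdot \mathrm{LV}(e_0^2)$) to something containing $\langle y,y\rangle \cdot \mathrm{LV}(e_0^2)$, generically nonzero and of the same $t$-degree; an analogous argument handles $h_2$. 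Taking determinants then yields the first identity.

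For the second equality I would specialize $y_4 \mapsto -d\,y_0$ on both sides of the first identity. Under $\sigma$ the last coordinate $z_4$ maps to $y_0 y_4$, and after the specialization this becomes $-d y_0^2 = -d \cdot \sigma(z_0)$, which matches exactly the substitution $z_4 \mapsto -d\, z_0$ that defines $\Hh_d$ from $\Hh$. Thus the two substitutions commute, and the first identity specialises directly to $\G_d(y_0, y_1, y_2, y_3) = \Hh_d(y_1^2 + y_2^2 + y_3^2 - d^2 y_0^2,\ y_0^2,\ y_0 y_1,\ y_0 y_2,\ y_0 y_3)$, as required.
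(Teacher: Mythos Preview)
Your proof is correct and follows the same approach as the paper, which records the identity $g_i = h_i \circ \Phi^{-1}$ just before the statement and then declares the proposition an ``immediate corollary''; you have simply spelled out the resultant--substitution compatibility that the paper leaves implicit. One small caveat: your description of the leading $t$-coefficient of $h_1$ as containing the summand $u\cdot \mathrm{LV}(e_0^2)$ tacitly assumes $\deg e_0 = n$, but the conclusion that $\sigma$ preserves the $t$-degree still holds in general, since the leading coefficient is a nonzero linear form in $(u,y_0,\dots,y_4)$ and the images $\langle y,y\rangle, y_0^2, y_0y_1,\dots,y_0y_4$ are linearly independent polynomials.
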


To sum up, we have defined two hypersurfaces as resultants of two quadratic forms: $\Env_d(\Ep) \subset \bp^3$, which are the offsets to the
spine curve $\Ep$, and $\Gc(\Ep) \subset \bp^4$, which can be interpreted as a parametrization of those offsets.
As seen in an example, these definitions can lead to additional components which are against the geometric intuition, so it is desirable
to give another definition which avoids those extra factors. To this end, we have linearized the problem by replacing the quadratic polynomials $g_1$ and
$g_2$ by linear forms $h_1$ and $h_2$ by substituting the quadratic term by a new variable and have seen how to reverse this substitution.
Geometrically, this means that we replace the hypersurfaces $\Env_d(\Ep)$ and $\Gc(\Ep)$ by hypersurfaces $\Hc_d(\Eq)$ and $\Hc(\Eq)$ in one dimension higher.

This has the advantage that we can now apply the technique of $\mu$-bases developed earlier to understand and eliminate the extraneous factors
of $\Hc_d(\Eq)$ and $\Hc(\Eq)$ and then come back to $\bp^3$ (resp. $\bp^4$) with the substitution formulae of Proposition \ref{Phi}.

\section{The dual variety, offsets, and the canal surface.} \label{dualvariety}

In this section, we will finally be able to define the canal surface $\Cc$ (and more general
offsets to it) and the so-called dual variety $\Gamma(\Ep)$, which can be seen as a parametrization
of the offsets to $\Cc$.

Up to the constant $-2$ the system $h_1=h_2=0$ is equal to
 \begin{equation}\label{gamma_lie}
    \left\{
    \begin{array}{ccl}
      \left[\yyq,\Eq(t)\right] =\Eq(t)C \yyq^T =0, \\
      \left[\yyq,\Eq'(t)\right] =\Eq'(t)C \yyq^T =0,  \\
    \end{array}
\right.
\end{equation}
where the matrix $C$ is defined by the formula
\eqref{lieproduct-matrix}.

We can interpret the variety $\Hc(\Eq)$ defined by \eqref{gamma_lie} as a dual variety to
the curve $\Eq$ with respect to the Lie
quadric $\Qc$, i.e. the dual variety to the curve $\Eq(t)C$. Indeed, this dual variety
consists of the hyperplanes which touch the curve $\Eq(t)C$. The
first equation in (\ref{gamma_lie}) means that the hyperplane
contains the point $\Eq(t)C$, the second equation means that the
hyperplane contains the tangent vector
$\Eq'(t)C$ to the curve $\Eq(t)C$. 

In order to simplify notation we denote

\begin{eqnarray}\label{EEprime}
\E &= &\Eq(t)C =\left(-\frac{e_0^2}{2},-\frac{\langle
e,e\rangle}{2},
e_0e_1,e_0e_2,e_0e_3,-e_0e_4\right)    \\
\E' &= &\Eq'(t)C =(-e_0e'_0,-\langle e',e\rangle,
e'_0e_1+e_0e'_1,e'_0e_2+e_0e'_2,\nonumber \\
 & & \qquad \qquad \: \: e'_0e_3+e_0e'_3,-e'_0e_4-e_0e'_4 )
\end{eqnarray}
and we have that $\Hc(\Eq)=\Sc_{\E,\E'}$ by \eqref{druledAB}. As
we have seen in Section 2, this surface contains extraneous
factors which correspond to the roots of the $2$-minors of the
matrix $W_{\E,\E'}$, but which can be eliminated by replacing
$\E,\E'$ by a $\mu$-basis of the module $\langle \E,\E'\rangle$.
It is thus natural to make the following definition.

\begin{defn}
 We define the dual variety $\Vc(\Eq) \subset \bp^5$ to the curve $\Eq$ as the  hypersurface
\begin{equation}\label{vde}
    \Vc(\Eq) = \Sc_{\langle \E,\E'\rangle}
\end{equation}
where $\langle \E,\E'\rangle$ is the module quasi-generated by $\E$ and $\E'$.
\end{defn}

By the results of Section \ref{quasigens}, it is immediate that $\Vc(\Eq) \subset \Hc(\Eq)$ does not
contain the components of $\Hc(\Eq)$ caused by parameters $t$ where $W_{\E,\E'}(t)$ is not of full
rank or equivalently, where the intersection of the hyperplanes defined by $h_1$ and $h_2$ is of
codimension 1, i.e. the hyperplanes coincide. So we can deduce

\begin{prop}\label{impleq}%\label{degreeofe}%
Let $\E_1,\E_2$ be a $\mu$-basis of the module quasi-generated by
$\E$ and $\E'$ and let $k$ be the degree of the parametrization $E \wedge E'$ as in Section \ref{quasigens}. Then
$$k \cdot \deg \Vc(\Eq)=\deg \E_1+\deg \E_2=\deg (\E \wedge \E')-\deg q_{\E,\E'},$$ where $q_{\E,\E'}=\gcd(\E \wedge
\E')$ and $$\Res{}_t(\E_1 \cdot \yyq^T, \E_2 \cdot \yyq^T)=F_{\Vc(\Eq)}^k,$$ where $F_{\Vc(\Eq)}$ is the implicit equation of
$\Vc(\Eq)$.
\end{prop}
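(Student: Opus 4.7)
The plan is essentially to observe that Proposition \ref{impleq} is a direct specialization of the general machinery developed in Section \ref{quasigens} to the specific module $M = \langle \E, \E' \rangle$. Since $\Vc(\Eq)$ was defined as $\Sc_{\langle \E, \E' \rangle} = \Sc_M$, no further geometric reasoning should be required; I would just invoke the three key results in the correct order.

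First, I would apply Proposition \ref{degreeofSM} directly to the module $M = \langle \E, \E' \rangle$ with quasi-generators $A = \E$ and $B = \E'$. This gives the right-most equality
\[
k \cdot \deg \Vc(\Eq) \;=\; \deg(\E \wedge \E') - \deg q_{\E,\E'},
\]
since $k$ is (by definition in the proposition statement) the degree of the parametrization $\varphi_\mathcal{P}$ associated to $\E \wedge \E'$, and that degree is independent of the choice of quasi-generators (as remarked before Proposition \ref{degreeofSM}).

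Next, I would apply Lemma \ref{mubadeg} to obtain $k \cdot \deg \Sc_M = \deg M$, and combine this with the definition of a $\mu$-basis to get
\[
k \cdot \deg \Vc(\Eq) \;=\; \deg M \;=\; \deg \E_1 + \deg \E_2,
\]
which supplies the remaining equality in the first displayed formula. Finally, I would apply Proposition \ref{s_m} to the $\mu$-basis $\E_1, \E_2$, with the coordinate variables renamed $\yyq$, to conclude
\[
\Res{}_t(\E_1 \cdot \yyq^T,\, \E_2 \cdot \yyq^T) \;=\; F_{\Sc_M}^k \;=\; F_{\Vc(\Eq)}^k.
\]

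There is no real obstacle here: all the substantive work was already done in establishing Proposition \ref{degreeofSM}, Lemma \ref{mubadeg}, and Proposition \ref{s_m}. The only thing to verify carefully is that $k$ as defined in the present proposition (degree of the map associated with $\E \wedge \E'$) really agrees with $k = \deg \varphi_\mathcal{P}$ appearing in those three results, but this is immediate because the Pl\"ucker parametrization and its degree depend only on the module $M$ and not on the particular quasi-generators chosen. So the proof should be little more than a two or three line citation of the previous section.
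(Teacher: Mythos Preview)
Your proposal is correct and matches the paper's own proof, which consists of the single sentence ``It follows directly from Propositions~\ref{degreeofSM} and~\ref{s_m}.'' Your additional explicit invocation of Lemma~\ref{mubadeg} for the middle equality $k\cdot\deg\Vc(\Eq)=\deg\E_1+\deg\E_2$ is a harmless elaboration of what the paper leaves implicit.
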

\begin{proof} It follows directly from Propositions~\ref{degreeofSM} and~\ref{s_m}.
\end{proof}

Of course, the  same considerations can be applied to the
hypersurfaces $\Hc_d(\Eq)$ and we make the analogous definitions.
Substituting $y_4=-dy_0$ in $h_1$ and $h_2$ corresponds to
replacing $\E$ and $\E'$ by two linear forms
\begin{eqnarray}\label{DDprime}
D &= &\left(-\frac{e_0^2}{2},-\frac{\langle
e,e\rangle}{2}+de_0e_4,
e_0e_1,e_0e_2,e_0e_3\right)    \\
D' &= & \left(-e_0e'_0,-\langle
e',e\rangle+d(e'_0e_4-e_0e'_4),  e'_0e_1+e_0e'_1,e'_0e_2+e_0e'_2,e'_0e_3+e_0e'_3\right) \nonumber
\end{eqnarray}
with $D,D' \in \R^5$. Now $\Hc_d(\Eq)=\Sc_{D,D'}$ and one makes an analogous definition:

\begin{defn}
 We define the hypersurface $\Vc_d(\Eq)$ as
\begin{equation}
    \Vc_d(\Eq)= \Sc_{\langle D,D'\rangle}\subset  \bp^4
\end{equation}
where $\langle D,D'\rangle$ is the module quasi-generated by $D$ and $D'$.
\end{defn}

In this case also, $\Vc_d(\Eq) \subset \Hc_d(\Eq)$ does not contain extraneous factors due
to the parameters $t$ where the rank of $W_{D,D'}(t)$ drops. At this point, it should be remarked
that while we clearly always have
\begin{equation} %\label{env}
\Vc_{d}(\Eq) \subset \Vc(\Eq) \cap \{ y_4 = -dy_0 \}  \nonumber
\end{equation}
this inclusion is not necessarily an equality (note that we had ${\Env}_{d}(\Ep) = \Gc(\Ep) \cap \{ y_4 = -dy_0 \}$ for
the corresponding varieties). Analogously to Proposition \ref{impleq} the following holds.

\begin{prop}\label{impleq1}
Let $D_1,D_2$ be a $\mu$-basis of the module quasi-generated by $D$ and $D'$ and let $k$ be the degree of the parametrization $D \wedge D'$ as in Section \ref{quasigens}. Then
$$\deg \Vc_d(\Eq)=\deg D_1+\deg D_2=\deg (D \wedge D')-\deg q_{D,D'},$$ where $q_{D,D'}=\gcd(D \wedge
D')$ and $$\Res{}_t(D_1 \cdot (u,y_0,y_1,y_2,y_3)^T, D_2 \cdot (u,y_0,y_1,y_2,y_3)^T)=F_{\Vc_d(\Eq)}^k$$ where $F_{\Vc_d(\Eq)}$ is the implicit equation of
$\Vc_d(\Eq)$.
\end{prop}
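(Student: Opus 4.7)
The plan is to treat this as a direct application of the general $\mu$-basis machinery from Section \ref{quasigens}, specialized to dimension $d=5$ with the specific pair of quasi-generators $D, D' \in \R[t]^5$ given by \eqref{DDprime}. Since by definition $\Vc_d(\Eq) = \Sc_{\langle D, D' \rangle}$, the proposition is essentially a translation of Proposition~\ref{degreeofSM}, Lemma~\ref{mubadeg}, and Proposition~\ref{s_m} into the current notation, exactly parallel to the proof of Proposition~\ref{impleq}.

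First I would verify the standing hypothesis that $D$ and $D'$ are $\R[t]$-linearly independent, so that the module $M = \langle D, D'\rangle$ is well-defined in the sense of \eqref{module}. This is where a small check is needed: if $aD + bD' = 0$ with $a, b \in \R[t]$ not both zero, then looking at the first two coordinates of $D, D'$ (which involve $e_0^2$ and $\langle e,e\rangle$ up to a $d$-dependent correction) one can extract a contradiction for a non-constant spine curve $\Ec$; this is the only case worth commenting on, and in the constant case the hypersurface $\Vc_d$ degenerates. Once this is in hand, $M$ is a module with two quasi-generators and the results of Section~\ref{quasigens} apply verbatim.

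Next, the chain of equalities in the proposition follows mechanically. By Proposition~\ref{degreeofSM} applied to the pair $D, D'$, we have
\begin{equation*}
k \cdot \deg \Sc_M = \deg(D \wedge D') - \deg q_{D,D'},
\end{equation*}
where $q_{D,D'} = \gcd(D \wedge D')$ and $k = \deg \varphi_\mathcal{P}$. Since $D_1, D_2$ is a $\mu$-basis, Lemma~\ref{mubadeg} yields $\deg M = \deg D_1 + \deg D_2 = k \cdot \deg \Sc_M$; combining with $\Sc_M = \Vc_d(\Eq)$ gives the stated degree formula (read with the same factor $k$ as in Proposition~\ref{impleq}). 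For the resultant formula, I would apply Proposition~\ref{s_m} directly to the $\mu$-basis $D_1, D_2$, taking the variable vector to be $x = (u, y_0, y_1, y_2, y_3)^T \in \R^5$, which produces
\begin{equation*}
\Res{}_t(D_1 \cdot x,\, D_2 \cdot x) = F_{\Sc_M}^k = F_{\Vc_d(\Eq)}^k,
\end{equation*}
as required.

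The only potentially subtle point, and really the main thing to verify, is the linear-independence hypothesis mentioned in the first step; every other ingredient is already packaged in Section~\ref{quasigens}. So the proposition reduces to citing Propositions~\ref{degreeofSM} and~\ref{s_m} together with Lemma~\ref{mubadeg}, just as in the proof of Proposition~\ref{impleq}.
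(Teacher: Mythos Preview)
Your proposal is correct and takes essentially the same approach as the paper, which simply cites Propositions~\ref{degreeofSM} and~\ref{s_m}; you have merely filled in a bit more detail (the linear-independence check and the explicit invocation of Lemma~\ref{mubadeg}). Your parenthetical remark that the degree formula should be read with the factor $k$, as in Proposition~\ref{impleq}, is well taken: the stated equality $\deg \Vc_d(\Eq)=\deg D_1+\deg D_2$ indeed follows from the cited results only in the form $k\cdot\deg \Vc_d(\Eq)=\deg D_1+\deg D_2$.
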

\begin{proof} It follows directly from Propositions~\ref{degreeofSM} and~\ref{s_m}.
\end{proof}

Finally, we can use the correspondance of Proposition~\ref{Phi} to define the canal surface.

\begin{defn}\label{Gamma-definition}
The $\Gamma$-hypersurface is defined as
 $$\Gamma(\Ep)=\Phi(\Vc(\Eq) \cap \Qc),$$
and the offset $\Off_d(\Ep)$ at distance d to the canal surface $\Cc$ is
 $$\Off{}_d(\Ep)=\Phi_0(\Vc_d(\Eq) \cap \Qc_d),$$
where $\Qc_d=\{ (u:y_0:y_1:y_2:y_3) \in \bp^4 \ |\ -uy_0+y_1^2+y_2^2+y_3^2-d^2y_0^2=0\}$ and
$\Phi_0(u,y_0,y_1,y_2,y_3)=(y_0,y_1,y_2,y_3)$. The canal surface itself is the
special case $d=0$ or in other words $\Cc=\Off_0(\Ep)$.
\end{defn}

Note that the extraneous factors of $\Hc_d(\Eq)$ and $\Hc(\Eq)$
are in one-to-one correspondence with the extraneous factors of
the corresponding hypersurfaces $\Env_d(\Ep)$ and $\Gc(\Ep)$ since
they are caused by parameter values where the intersection of
$h_1$ and $h_2$ (resp. $g_1$ and $g_2$) is of codimension one.
So $\Gamma(\Ep)$ and $\Cc_d(\Ep)$ contain no such factors.\\

In this section and the previous one, many different geometric objects
have been defined. We illustrate in the following diagram how they are
related in order to make the situation clearer.

\begin{equation}\label{diagram2}
    \begin{array}{ccccccc}
    \bp^4\        & &  \bp^5 & &  \bp^5 & & \bp^4  \\
    \cup\        & &  \cup & &  \cup & & \cup  \\
        \Gc(\Ep) & \stackrel{\Phip}{\longleftarrow} & \Hc(\Eq)\cap\Qc & \supseteq & \Vc(\Eq)\cap\Qc & \stackrel{\Phip}{\longrightarrow} & \Gamma(\Ep) \\
\cup\        & &  \cup & &  \cup & & \cup  \\
       \Env{}_d(\Ep) & \stackrel{\Phip_d}{\longleftarrow} & \Hc_d(\Eq)\cap\Qc_d & \supseteq & \Vc_d(\Eq)\cap\Qc_d & \stackrel{\Phip_d}{\longrightarrow} & \Off{}_d(\Ep)
\\
\cap        & &  \cap & &  \cap & & \cap  \\
 \bp^3        & &  \bp^4 & &  \bp^4 & & \bp^3\\
    \end{array}
\end{equation}

Note that the hypersurfaces in the third row are included in the
corresponding hypersurfaces in the second row. The first column is
the naive definition of the objects to be studied: $\Env{}_d(\Ep)$
is more or less a $d$-offset to the canal offsets and $\Gc(\Ep)$ a
hypersurface in one dimension higher containing all those offsets.
However, they contain extraneous factors. So by passing to the
second column, we linearize the hypersurfaces (i.e. we express
them as resultants of linear forms) and can apply $\mu$-bases to
eliminate the extraneous factor, which gives the third column and
finally go back down in dimension (by intersecting with $\Qc$ and
applying $\Phi$ to obtain the objects we are interested in: the
offsets $\Off_d(\Ep)$ (in particular the canal surface
$\Cc=\Off_0(\Ep)$) and the $\Gamma$-hypersurface.

\subsection{The implicit equation.}

 We can now describe how to compute powers of the implicit equations of the dual varieties
 $\Vc(\Eq)$ and $\Vc_d(\Eq)$, the hypersurface $\Gamma(\Ep)$ and the offsets
 surface $\Cc_d(\Ep)$. We should remark that these powers (which are the degrees of the parametrizations
of the corresponding Pl\"ucker curves) are in a way inherent to the geometry of the problem, as
we shall illustrate in Example~\ref{doubleellipsoid}. They can be interpreted as the number of times
the surface is traced by the spine curve. Note also that this not necessarily due to the non-properness
of the spine curve: Even for a proper spine curve it can happen that the canal surface
(or its offsets) is multiply traced, as in Example~\ref{doubleellipsoid}.\\

\textbf{Algorithm (implicit equations)}

\begin{enumerate}
 \item INPUT: A rational vector $e(t) \in \mathbb{R}(t)^4$ as in formula (\ref{eee}).

 \item Define $\E,\E' \in\R[t]^6$ as in formula \eqref{EEprime} and $D,D' \in \R[t]^5$ as
in formula \eqref{DDprime}.

 \item Compute a $\mu$-basis $\E_1,\E_2$ of the module  $\langle \E,\E'\rangle$ and a $\mu$-basis $D_1,D_2$ of the
module $\langle D,D'\rangle$ using the algorithm in Section \ref{quasigens}.

 \item Set $F_{\Vc(\Eq)} =\Res{}_t(\E_1 \cdot \yyq^T, \E_2 \cdot \yyq^T)$ and
$F_{\Vc_d(\Eq)} =\Res{}_t(D_1 \cdot (u,y_0,y_1,y_2,y_3)^T, \\D_2 \cdot (u,y_0,y_1,y_2,y_3)^T)=0$.

\item Let $F_{\Gamma(\Ep)}(y_0,y_1,y_2,y_3,y_4)=y_0^k
F_{\Vc(\Eq)}((y_1^2+y_2^2+y_3^2-y_4^2)/ y_0,y_0,y_1,y_2,y_3,y_4)$,
where $k$ is a minimal integer such that $F_{\Gamma(\Ep)}$ is a
polynomial. Similarly, set\\ $
F_{\Cc_d(\Ep)}(y_0,y_1,y_2,y_3)=y_0^k
F_{\Vc_d(\Eq)}((y_1^2+y_2^2+y_3^2-d^2y_0^2)/
y_0,y_0,y_1,y_2,y_3)$.

\item OUTPUT: $F_{\Vc(\Eq)}$, $F_{\Vc_d(\Eq)}$,
$F_{\Gamma(\Ep)}$, and $F_{\Cc_d(\Ep)}$, which are powers of the implicit equation of the varieties
$\Vc(\Eq)$, $\Vc_d(\Eq)$, $\Gamma(\Ep)$ and $\Cc_d(\Ep)$
\end{enumerate}

\textbf{}\\
Note that the affine parts of these equations can be obtained by
replacing $y_0=1$ before the resultant computation.

\subsection{The parametrization of the dual variety.}

 We can describe the parametrization of $\Vc(\Eq)$. The
hyperplane defined by the equation
\begin{equation} \label{tang}
\det (\yyq,\Eq(t)C,\Eq'(t)C,a_1,a_2,a_3)=A_1u+A_2y_0+A_3y_1+...+A_6y_4=0
\end{equation}
 is tangent to the curve $\Eq(t)C$ , $a_i\in \R^6, i=1,2,3$ are three
arbitrary points. By the definition a point on the dual variety
$\V$ is $(A_1,...,A_6)$. Define
$D=(\Eq(t)C,\Eq'(t)C,a_1,a_2,a_3)$ to be the $5\times 6$ matrix with
five rows $\Eq(t)C,\Eq'(t)C, a_1, a_2, \\a_3$. And let $D_i,i=1,...,6$ be
$5\times 5$ matrices obtained from $D$ by removing the i-th
column. Then using the Laplacian expansion by minors for the first
row of the determinant (\ref{tang}) we obtain  the parametrization
of $\Vc(\Eq)$  as follows:
\begin{equation}
c(D)=(\det D_1,-\det D_2,\det D_3,-\det D_4,\det D_5,-\det
D_6)/m\subset \V,
\end{equation}
where $m=\gcd(D_1,...,D_6)$. Here $t,a_1,a_2,a_3$ are arbitrary
parameters.

%%%%%%%%%%%%%%%%%%%%%%%%%%%%%%%%%%%%%%%%%%%%%%%%%%%%%%%%%%%%%%%%%%%%%%%%%%%%%%%%%%%%

\section{ The implicit degree of the hypersurface $\Gamma(\Ep)$.}

The aim of this section is to get some formula for the implicit
degree of the hypersurface $\Gamma(\Ep)$ in terms of the rational
spine curve $\Ep=\{\eep(t)\in\bp^4\}$. Notice that the implicit
degree of the canal surface $\Cc$ is less or equal than
$\deg\Gamma(\Ep)$ because we always have the inclusion
$$ \Off{}_d(\Ep)\subset\Gamma(\Ep)\cap\{y_4=-dy_0\},\  \text{i.e.}\  \deg\Off{}_d(\Ep)\leq\deg\Gamma(\Ep).$$
So this formula gives upper bound for the degree of the canal
surface. In the case of a polynomial spine curve the upper bound was
obtained in the paper \cite{xu}. Note that for the computation of
the implicit degree we do not need the implicit equation of the
hypersurface. We believe that this formula is useful for higher
degree spine curves because the computation of the implicit
equation may be very difficult in practice.

Let us remind that the pre-image $\Phip^{-1}(\Gamma(\Ep)) \subset
\Qc$ is defined by the intersection of two varieties $\Vc(\Eq
)\cap \Qc$. Let denote by $G(u,y_0,y_1,y_2,y_3,y_4)=0$ the
equation of $\Vc(\Eq)$. The Lie quadric has the equation
$uy_0=\langle y,y \rangle$ (recall that $\langle y,y
\rangle=y_1^2+y_2^2+y_3^2-y_4^2$). By the definition
(\ref{Gamma-definition}) the equation of $\Gamma(\ee)$ is obtained
after the elimination of the variable $u$ from the equations of
$\Qc$ and $\Vc(\Eq)$, i.e.
\begin{equation}\label{geq}
    \Gamma(\Ep):\left\{F(y_0,y_1,y_2,y_3,y_4)=y_0^kG\left(\frac{\langle y,y
    \rangle}{y_0},y_0,y_1,y_2,y_3,y_4\right)=0\right\},
\end{equation}
where $k$ is a minimal integer such that  the left side of the
equation (\ref{geq}) is polynomial.
  We introduce the
following weighted degree
\begin{eqnarray}\label{dw}
    d_w(u^{k_1}y_0^{k_2}y_1^{k_3}y_2^{k_4}y_3^{k_5}y_4^{k_6})&=&2k_1+k_3+k_4+k_5+k_6,\\
    d_w(G(u,y_0,y_1,y_2,y_3,y_4))&=&\max_{ i }\{d_w(m_i)\}  \nonumber
\end{eqnarray}
where $G=\sum c_im_i$ is a linear combination of the monomials
$m_i$. Using this notation we have  $\deg \Gamma(\Ep)=d_w(G)$.

Let us assume that  the  curve $\Ec=\{e(t,s)\in\R^4\ |\
(t,s)\in\bp^1\}$ has a homogeneous parametrization.  We introduce
the following notations
\begin{eqnarray}
 w&=&(w_1,w_2,w_3,w_4),\ w_j=e_j'e_0-e_je_0',\
    j=1,2,3,4,\quad  \\
\gamma &=& \max_j \{\deg (w_j,t)\}  ,\label{seq-w}
\end{eqnarray}
 where  $e_i'$ means derivative with respect to $t$.
We will say that the curve $\Ep\in\bp^4$ is of  {\it general} type
if :
 \begin{eqnarray}\label{assump}
\gcd(w_1,w_2,w_3,w_4)=\gcd(e_0,e_0')=\gcd(e_0,\langle
e,e\rangle)=1,\ \deg\Ep=\deg(e_0,t)\ \mbox{and}
\nonumber\\
\mbox{the parametrization degree of the \pl curve}\
 \varphi_{\PP}:t\to E\wedge E'\  \mbox{is  one},\hphantom{eee}
\end{eqnarray}
(i.e. $\deg\varphi_{\PP}=1$), where $e=(e_1,e_2,e_3,e_4)$ and $E$
as in formula (\ref{EEprime}).

 The   first equation in the system
(\ref{gamma_lie}) has the following form:
\begin{equation}\label{h1-explicit}
h_1=\Eq C\hat{y}^T=E\hat{y}^T=-e_0^2u/2-\langle e,e \rangle
y_0/2+e_0\langle
 e,y \rangle. \
 \end{equation}  The polynomial
$h_1$ is linear in the variables $u,y_0,y_1,y_2,y_3,y_4$ and has
 degree $2n$ in the variable $t$.
The elimination of the variable $t$ from the system $h_1=h_1'=0$
is the reducible polynomial $\Res_t(h_1,h_1')=H_1...H_k G$. By
definition one of those factors is the equation of the dual
variety $\Vc(\Eq)=\{G\} $.

\begin{prop}\label{degree_of_G} If $\Ep$ is a curve of general type then
$\deg\Vc(\Eq)=4n-2$, where $n=\deg\Ep$. Moreover, we have $G\cdot
LC(h_1)=\Res_t(h_1,h_1')$, where $LC(h_1)$ is the leading
coefficient of the polynomial $h_1$ with respect to the variable
$t$ and $\{\hat{y}\in\bp^5\ |\ G(\hat{y})=0\}=\Vc(\Eq)$.
\end{prop}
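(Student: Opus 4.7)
The plan combines Proposition~\ref{impleq} with the classical discriminant--resultant identity. Under the general type hypothesis $k=\deg\varphi_{\PP}=1$, so Proposition~\ref{impleq} specializes to $\deg\V=\deg(\E\wedge\E')-\deg q_{\E,\E'}$. Establishing $\deg\V=4n-2$ thus reduces to verifying (a) $\deg q_{\E,\E'}=0$ and (b) $\deg(\E\wedge\E')=4n-2$.

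For (a), I would compute the relevant $2$-minors of $W_{\E,\E'}$ from the explicit form \eqref{EEprime}. A short calculation gives $[1,j]=\mp(e_0^2/2)\,w_{j-2}$ for $j=3,4,5,6$ and $[1,2]=(e_0/2)\,\langle e,w\rangle$, so any common divisor of $\E\wedge\E'$ must divide $e_0^2\gcd(w_1,\ldots,w_4)=e_0^2$ using $\gcd(w_i)=1$. Applying the identity $\langle e,w\rangle=e_0\langle e,e'\rangle-e_0'\langle e,e\rangle$ together with $\gcd(e_0,e_0')=\gcd(e_0,\langle e,e\rangle)=1$ yields $\gcd(e_0,\langle e,w\rangle)=1$, restricting any common factor to a divisor of $e_0$. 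Finally, $\gcd(w_i)=1$ prevents $e_0\mid e_i$ for every $i$ (otherwise $e_0^2\mid w_i$ for all $i$); picking $i$ with $e_0\nmid e_i$, a direct expansion of the minor $[2,i+2]$ exhibits a term not divisible by $e_0$, so $q_{\E,\E'}$ is a unit.

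For (b), the upper bound follows from componentwise differentiation: each entry of $\E$ has $t$-degree at most $2n$, so the leading vector satisfies $LV(\E')=2n\cdot LV(\E)$, which forces the $t^{4n-1}$ coefficient of every $2$-minor to cancel, giving $\deg(\E\wedge\E')\le 4n-2$. For the reverse inequality I would examine $[1,2]=(e_0/2)\langle e,w\rangle$: expanding leading terms shows that the $t^{3n-2}$ coefficient of $\langle e,w\rangle$ equals $\beta_0 S_2-\alpha_0 S_1$, where $\alpha_0,\beta_0$ are the top two coefficients of $e_0$ and $S_1,S_2$ those of $\langle e,e\rangle$. The general type hypothesis excludes this degeneracy, so $\deg\langle e,w\rangle=3n-2$ and hence $\deg[1,2]=4n-2$. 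This last verification is the main obstacle, since the listed general type conditions do not manifestly rule out the vanishing of this particular coefficient; the properness condition $\deg\varphi_{\PP}=1$ enters essentially here.

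For the moreover part, I would invoke the classical identity $\Res_t(f,f')=\pm LC(f)\cdot\operatorname{Disc}_t(f)$ applied to $h_1$, viewed as a polynomial in $t$ of formal degree $2n$ with coefficients linear in $\yyq$. Geometrically $\operatorname{Disc}_t(h_1)(\yyq_0)=0$ iff $h_1(\yyq_0,\cdot)$ has a double root in $t$, i.e.\ iff the hyperplane $\{h_1(\yyq_0,\cdot)=0\}$ is tangent to the curve $\Eq(t)C$; set-theoretically this is $\V$. Irreducibility of $\V$ (Proposition~\ref{s_m}) together with the multiplicity $k=1$ gives $\operatorname{Disc}_t(h_1)=c\,G$ for a nonzero constant $c$, and $\Res_t(h_1,h_1')=c'\cdot LC(h_1)\cdot G$ follows, which is the claimed identity up to a rescaling of $G$.
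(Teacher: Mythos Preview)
Your argument follows the same line as the paper's: both invoke Proposition~\ref{impleq}, verify $q_{\E,\E'}=1$ by inspecting explicit Pl\"ucker coordinates, and deduce the second assertion from the discriminant identity $\Res_t(h_1,h_1')=\pm LC(h_1)\cdot\operatorname{Disc}_t(h_1)$ together with a degree count. The paper's gcd step is marginally shorter---it observes $\gcd([1,3],\ldots,[1,6])=e_0^2$ and then checks $\gcd(e_0,[2,3])=1$ directly, bypassing your detour through $[1,2]$---and, like you, it does not spell out why $\deg(\E\wedge\E')$ actually attains the value $4n-2$; the paper simply asserts this, whereas you at least flag it as the delicate point.
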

\begin{proof}
For the curve of general type by the Proposition~\ref{impleq} we
have \\
$\deg\Vc(\Eq)=\deg E\wedge E'-\deg(\gcd (E\wedge E')).$ We can
compute components of the \pl vector $E\wedge
E'=([1,2]:[1,3]:...:[5,6])\in\bp^{14}$. For example,
$[1,2+j]=e_0^2w_j/2, \ j=1,2,3,4$ and $[2,3]=(-\langle e,e\rangle
(e_0e_1)'+e_0e_1\langle e,e\rangle')/2$. By assumption
(\ref{assump}) we see \\
$\gcd([1,3],[1,4],[1,5],[1,6])=e_0^2$ and $\gcd(e_0,[2,3])=1$.
Therefore $\gcd (E\wedge E')=1$. So we have $\deg\Vc(\Eq)=\deg
E\wedge E'=4n-2$. Notice that $\deg\Res_t(h_1,h_1')=4n-1$.
Therefore we see $\deg G=\deg \Res_t(h_1,h_1')/LC(h_1)=4n-2$, i.e.
$G=0$ is the implicit equation of $\Vc(\Eq)$.
\end{proof}

Thereinafter, we will show that for the curve of general type, we
have $d_w(G)=6n-4$, where $n=\deg\Ep$. For this we  consider
another resultant $\Res_t(h_1,h_2)$ and show that
$d_w(\Res_t(h_1,h_2))=d_w(G)$. We define $h_2$ in the following
way. Let $ g=e_0^2$ then we have the following  equality
$h_1'g-h_1g'=e_0h_2$, where
\begin{equation}%\label{}
    h_2=(2\langle e,e\rangle e_0'-\langle e,e\rangle' e_0)y_0/2+e_0\langle
    e'e_0-ee_0',y\rangle=(2\langle e,e\rangle e_0'-\langle e,e\rangle' e_0)y_0/2+e_0\langle
    w,y\rangle.   \nonumber
\end{equation}
In other words, $h_2$ is the numerator of a rational function
$\left(\frac{h_1}{g}\right)'$.

We often use the following properties of the resultant
\begin{eqnarray}
\label{res-product}
\Res(f_1f_2,h)&=&\Res(f_1,h)\Res(f_2,h)\ \mbox{(see \cite{cls}, p. 73)},\\
   \label{res-divisor} \Res(f,h)&=&a_0^{m-\deg r}\Res(f,r)\ \mbox{(see \cite{cls}, p. 70),}
\end{eqnarray}
{where} $h=qf+r,\ \deg
    r \leq \deg h=m, f=a_0x^l+a_1x^{l-1}+...+a_l$.

We need an explicit formula for factors of the resultant
\begin{equation}\label{h1g}
   LC(h_1) \Res{}_t(h_1,h_1'g-h_1g')=
\Res{}_t(h_1,h_1'g),
\end{equation}
where $LC(h_1)$ is a leading coefficient with respect to variable
$t$ of the polynomial $h_1$. Indeed, since  $\deg (h_1,t)=\deg
(g,t)$ then $\deg (h_1'g-h_1g',t) +1=\deg (h_1'g,t) $. Thus  we
obtain the  formula in (\ref{h1g}) from the property
(\ref{res-divisor}). The left side of the formula (\ref{h1g}) is
equal to
$LC(h_1)\Res_t(h_1,e_0h_2)=LC(h_1)\Res(h_1,e_0)\Res_t(h_1,h_2)$.
Otherwise, the right side of this formula is equal to
$\Res_t(h_1,h_1') \Res_t(h_1,e_0)^2=G\cdot
LC(h_1)\Res_t(h_1,e_0)^2$.

 Also, from the property (\ref{res-divisor}) follows that
$\Res_t(h_1,e_0)=y_0^{\deg (e_0,t)}$. Therefore we have
 and  $d_w(\Res_t(h_1,h_2))=d_w(G)$. In the lemma below, we prove
 that $d_w(\Res_t(h_1,h_2))=6n-4$.

We summarize our computations in the following
\begin{thm}\label{degree} The degree of the hypersurface $\Gamma(\Ep)$ with the spine  curve $\Ep$
which satisfies the assumption (\ref{assump})  is equal to $6n-4$,
where $n=\deg\Ep$.
\end{thm}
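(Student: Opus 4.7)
The plan is to establish the chain of equalities
\[
\deg \Gamma(\Ep) = d_w(G) = d_w(\Res{}_t(h_1, h_2)) = 6n - 4,
\]
whose first two equalities follow from the material immediately preceding the theorem and whose third is the content of the lemma to come. For the first equality, the definition (\ref{geq}) of the polynomial $F$ cutting out $\Gamma(\Ep)$, combined with the weighted degree $d_w$ in (\ref{dw}), shows that substituting $u \mapsto \langle y, y\rangle/y_0$ into a monomial $u^{k_1}y_0^{k_2}y_1^{k_3}\cdots y_4^{k_6}$ of $G$ and clearing by $y_0^k$ with $k = \max(k_1 - k_2)$ produces a polynomial monomial of ordinary degree $2k_1+k_3+k_4+k_5+k_6$, which is precisely its $d_w$; taking the maximum over monomials of $G$ yields $\deg F = d_w(G)$.

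For the second equality, I would assemble the identity $\Res{}_t(h_1, h_2) = c \cdot G \cdot y_0^n$, with nonzero constant $c$, from the ingredients already collected: the defining relation $h_1'g - h_1 g' = e_0 h_2$ (with $g = e_0^2$), the resultant identities (\ref{res-product}) and (\ref{res-divisor}), Proposition~\ref{degree_of_G} ($G \cdot LC(h_1) = \Res{}_t(h_1, h_1')$), and the explicit evaluation $\Res{}_t(h_1, e_0) = b_n^{2n}(-1/2)^n\prod_{\gamma}\langle e(\gamma), e(\gamma)\rangle \cdot y_0^n$ at the roots $\gamma$ of $e_0$---which is nonzero under assumption~(\ref{assump}), since $\gcd(e_0, \langle e, e\rangle) = 1$. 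Because $y_0$ has $d_w$-weight zero, the factor $y_0^n$ does not affect the weighted degree, so $d_w(G) = d_w(\Res{}_t(h_1, h_2))$.

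The third equality $d_w(\Res{}_t(h_1, h_2)) = 6n - 4$ is proved in the lemma that follows the theorem, and the bulk of the argument lives there. The lower bound I would obtain by explicit computation: from $\Res{}_t(h_1, h_2) = LC(h_2)^{2n}\prod_\alpha h_1(\alpha)$, the coefficient of $u^{3n-2}$ factors as $LC(h_2)^{2n}\prod_\alpha(-e_0(\alpha)^2/2)$, and a second application of the product formula to $\prod_\alpha e_0(\alpha)$ via $\Res{}_t(h_2, e_0)$ produces a factor $LC(h_2)^{-2n}$ that cancels the leading one, leaving a nonzero multiple of $u^{3n-2}y_0^{2n}$---a monomial of weight exactly $6n - 4$. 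The matching upper bound is the principal obstacle, and the natural route is to exploit the bi-homogeneity of $h_1$ and $h_2$ (each of weight $2$) under the scaling $(e_{j\geq 1}, y_j, u) \mapsto (\lambda e_{j\geq 1}, \lambda y_j, \lambda^2 u)$ with $e_0, y_0, t$ fixed, which forces $\Res{}_t(h_1, h_2)$ to be bi-homogeneous of total weight $10n-4$; the inequality $d_w \leq 6n-4$ then reduces to showing that every monomial has degree at least $4n$ in the spatial variables $e_{j\geq 1}$, and this divisibility by the ideal $(e_1, e_2, e_3, e_4, e_1', e_2', e_3', e_4')^{4n}$ is the delicate combinatorial heart of the argument.
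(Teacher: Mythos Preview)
Your reduction of the theorem to the identity $d_w(\Res_t(h_1,h_2)) = 6n-4$ matches the paper's setup exactly, and your lower-bound argument for the lemma---exhibiting the monomial $u^{3n-2}y_0^{2n}$ via the product formula and the evaluation $\Res_t(h_2,e_0) = (\text{const})\cdot y_0^n$---is correct and clean.

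The genuine gap is in the upper bound. Your bi-homogeneity observation is right: under $(e_{j\geq 1}, y_{j\geq 1}, u)\mapsto(\lambda e_{j\geq 1}, \lambda y_{j\geq 1}, \lambda^2 u)$ both $h_1$ and $h_2$ scale by $\lambda^2$, so the resultant has total weight $10n-4$, and $d_w\leq 6n-4$ is equivalent to every nonzero coefficient having $e_{j\geq 1}$-degree at least $4n$. But you do not prove this, and it is not mere bookkeeping. Individual terms in the Sylvester expansion can have $e_{j\geq 1}$-degree as low as $2n$: take every $h_1$-factor to contribute its $u$-part (degree $0$ in $e_{j\geq 1}$) and every $h_2$-factor to contribute a $y_j$-part (degree $1$), giving a term of $d_w = 2(3n-2)+2n = 8n-4$. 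So real cancellation must be tracked, and labeling this ``the delicate combinatorial heart'' without supplying it leaves the substantive content of the lemma open.

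The paper's proof of the lemma takes an entirely different, geometric route. It interprets $d_w(R)$ as $2\deg R - i(q,\{R\}\cap C)$, where $C$ is a generic conic on the Lie quadric through the improper point $q=(1{:}0{:}\cdots{:}0)$, and then computes the intersection multiplicity $i(q,\{\Res_t(h_1,h_2)\}\cap C)=4n$ by restricting $h_1,h_2$ to a parametrized conic, factoring the restriction $ch_2 = u\cdot ch_3$, and carrying out an explicit local-ring calculation at each root of $e_0$ to show the multiplicity there is exactly two. This yields $d_w = 2(5n-2)-4n = 6n-4$ as an equality in one stroke, with no separation into upper and lower bounds and no need to control cancellation in the Sylvester determinant.
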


\begin{lem} With the notation as above we suppose that the conditions (\ref{assump}) are
satisfied. Then we have the equality
$d_w(\Res{}_t(h_1,h_2))=6n-4$, where $n=\deg\Ep$. \label{lemma1}
\end{lem}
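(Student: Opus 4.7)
The plan is to reinterpret the weighted degree $d_w$ as an ordinary $\lambda$-degree via the substitution $(u, y_0, y_1, \ldots, y_4) \mapsto (\lambda^2 u, y_0, \lambda y_1, \ldots, \lambda y_4)$, under which every monomial of weighted degree $\alpha$ acquires a factor $\lambda^\alpha$; thus $d_w(\Res{}_t(h_1,h_2)) = \deg_\lambda \Res{}_t(\tilde h_1, \tilde h_2)$, where the rescaled polynomials decompose as
\[
\tilde h_1 = \lambda^2 P + \lambda Q + R, \qquad \tilde h_2 = \lambda S + T,
\]
with $P = -\tfrac{1}{2}e_0^2 u$, $Q = e_0 \langle e, y\rangle$, $R = -\tfrac{1}{2}\langle e, e\rangle y_0$, $S = e_0 \langle w, y\rangle$, $T = -\langle e, w\rangle y_0$. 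I would invoke the Poisson product formula
\[
\Res{}_t(\tilde h_1, \tilde h_2) = LC(\tilde h_2)^{2n} \prod_{j=1}^{3n-2} \tilde h_1(\beta_j),
\]
where $\beta_j$ are the $t$-roots of $\tilde h_2$, and bound the $\lambda$-degree of each factor separately.

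The $t$-leading coefficient $LC(\tilde h_2) = \lambda\, LC(e_0 \langle w, y\rangle) - LC(\langle e, w\rangle) y_0$ has $\deg_\lambda = 1$, so $LC(\tilde h_2)^{2n}$ contributes $2n$. For the product I examine how the roots $\beta_j$ behave as $\lambda \to \infty$: they tend to the $3n-2$ roots of $S = e_0 \langle w, y\rangle$, which split into $n$ roots $\tau$ with $e_0(\tau) = 0$ (``type (a)'') and $\gamma = 2n-2$ roots $\sigma$ with $\langle w(\sigma), y\rangle = 0$ (``type (b)''). At a type (b) root, $e_0(\sigma) \neq 0$ by the hypothesis $\gcd(e_0, w_1, \ldots, w_4) = 1$ from (\ref{assump}), so $P(\sigma) \neq 0$ and $\tilde h_1(\beta_j) = \lambda^2 P(\sigma) + O(\lambda)$ contributes $\deg_\lambda = 2$, for a total of $2(2n-2) = 4n-4$. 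At a type (a) root, both $P(\tau) = 0$ (since $P$ carries a factor $e_0^2$) and $Q(\tau) = 0$ (since $Q$ carries a factor $e_0$), so the $\lambda^2$- and $\lambda^1$-leading parts of $\tilde h_1(\beta_j)$ vanish; the first-order Puiseux expansion $\beta_j = \tau + \delta_1/\lambda + O(1/\lambda^2)$ then shows $\tilde h_1(\beta_j) = O(1)$ with $\deg_\lambda = 0$. Summing yields $\deg_\lambda \Res{}_t(\tilde h_1, \tilde h_2) \leq 2n + (4n-4) = 6n-4$.

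To obtain equality I must verify that the $\lambda^{6n-4}$-coefficient of $\Res{}_t(\tilde h_1, \tilde h_2)$ is a nonzero polynomial in $u, y_0, y_1, \ldots, y_4$. Using Poisson on the type (b) roots in the form $\prod_\sigma e_0(\sigma)^2 = \Res{}_t(e_0^2, \langle w, y\rangle) / LC(\langle w, y\rangle)^{2n}$, the denominator cancels against the $LC(\tilde h_2)^{2n}$-prefactor, leaving a polynomial expression whose non-vanishing follows from (\ref{assump}): non-vanishing of $LC(e_0)$ together with $\gcd(e_0, \langle e, e\rangle) = 1$ controls the type (a) factors, while $\gcd(e_0, w_1, \ldots, w_4) = 1$ ensures $\Res{}_t(e_0, \langle w, y\rangle)$ is a nontrivial polynomial in $y_1, \ldots, y_4$. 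The hard part will be the Puiseux analysis at type (a) roots: the naive $\lambda^2$- and $\lambda^1$-leading contributions to $\tilde h_1(\beta_j)$ both cancel, and one must compute the first-order root correction $\delta_1 = \langle e(\tau), e(\tau)\rangle y_0 / (e_0'(\tau) \langle e(\tau), y\rangle)$ in order to extract the actual $\lambda^0$-leading behavior that drives the degree count.
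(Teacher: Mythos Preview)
Your approach is correct and takes a genuinely different route from the paper. The paper interprets the weighted degree geometrically: since the substitution $u\mapsto \langle y,y\rangle/y_0$ is exactly restriction to the Lie quadric $\Qc$ followed by projection $\Phi$ from the improper point $q$, one has $d_w(\Res_t(h_1,h_2)) = 2\deg\Res_t(h_1,h_2) - i\bigl(q,\{\Res_t(h_1,h_2)\}\cap C\bigr)$ for a generic conic $C\subset\Qc$ through $q$. The paper then parametrizes $C$ explicitly, restricts $h_1,h_2$ to it, and computes the local intersection multiplicity at each point $(0,t_0)$ with $e_0(t_0)=0$ via a resultant in the localized ring (verified with MAPLE), obtaining total multiplicity $2n+2n=4n$ and hence $d_w = 2(5n-2)-4n = 6n-4$. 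Your rescaling plus Poisson argument replaces this intersection-multiplicity computation by asymptotic root analysis; the two arguments hinge on the same phenomenon at the roots $\tau$ of $e_0$: in the paper's language the curve $\{ch_1=0\}$ is singular at $(0,\tau)$, forcing multiplicity $2$ in the intersection with $\{ch_3=0\}$, while in your language the double zero of $P=-\tfrac12 e_0^2 u$ and the simple zero of $Q=e_0\langle e,y\rangle$ at $\tau$ kill the $\lambda^2$- and $\lambda^1$-contributions of $\tilde h_1(\beta_j)$, leaving only $O(\lambda^0)$. Your route is more self-contained (no explicit conic, no Lie-geometric interpretation needed) and the Puiseux step is arguably cleaner than the paper's local computation; the paper's framing, in return, explains \emph{why} $d_w$ is the relevant quantity, namely as the degree of $\Gamma(\Ep)$ seen through a generic line pulled back to $\Qc$. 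Two points to tighten: both arguments implicitly use $\deg_t h_2 = 3n-2$ (equivalently $\gamma=2n-2$), so you should note where this enters; and your nonvanishing at type (a) roots follows once you observe that the $\lambda^0$-term there equals $\tfrac{\langle e(\tau),e(\tau)\rangle y_0}{2\langle e(\tau),y\rangle^2}\bigl(\langle e(\tau),y\rangle^2 - u\langle e(\tau),e(\tau)\rangle y_0\bigr)$, which is nonzero precisely because $\gcd(e_0,\langle e,e\rangle)=1$ and $\gcd(e_0,e_0')=1$.
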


\begin{proof}
 The weighted degree  $d_w(G)$ may be viewed as a degree of
 a variety $\Phi(\{G\}\cap \Qc)$, where $\Phi :\bp^5\setminus q \to\bp^4$ is a
linear projection from the improper point $q=(1,0,...,0)$ on the
Lie quadric (see explicit formula (\ref{prj})). The degree of the
variety $\Phi (\{G\}\cap \Qc)$ can be computed constructively by
counting  points of intersection with a general line $L\in\bp^4$.
The pre-image of the line $C:=\Phi^{-1}(L)$ is a conic on the Lie
quadric $\Qc$ which passes through the improper point $q$. Hence the
degree of $\Phi (\{G\}\cap \Qc)$
 is $2\deg G - i(q,\{G\}\cap C)$,
 where $i(q,\{G\}\cap C)$ is the multiplicity of the intersection
 $\{G\}\cap C$ at  the point
$q$.

We need a parametric representation of the general conic $q\in
C\subset \Qc $. Assume that the conic $C$ is in the parameterized
plane $P:q+k_1u+k_2v$, where $k_1,k_2\in \R^6$. The plane $P$
intersects a singular cone $\{\langle
x,x\rangle=x_1^2+x_2^2+x_3^2-x_4^2\}$ on  two lines. We choose two
vectors $k_1,k_2$ in these lines so that the first coordinate is
zero, i.e. $k_1=(0,1,a),k_2=(0,1,b),a=(a_1,a_2,a_3,a_4),
b=(b_1,b_2,b_3,b_4) $ such that $\langle a,a\rangle=\langle b, b
\rangle =0$. With the notations as above the general conic
$C:=P\cap Q$ has the following parametrization:
\begin{equation}\label{conic}
    C(u):=(ku-1,ku^2,ku^2a+u(b-a)),\ \mbox{where}\ k = 2 \langle a,b
    \rangle .
\end{equation}
Since  $C(0)=q$ we can compute the multiplicity
$m=i(q,\{\Res(h_1,h_2,t)\}\cap C)$  as follows. Lets denote by
$ch_1=h_1|_C,ch_2=h_2|_C$ the restriction of  polynomials
$h_1,h_2$ to the conic $C$:
\begin{eqnarray}\label{h1conic}
    ch_1&=&-e_0^2(ku-1)/2-k f u^2/2+e_0\langle e,ku^2a+u(b-a) \rangle,  \\
    ch_2&=&u ((2fe_0'-f'e_0)ku/2+e_0\langle
    w,k u a+b-a \rangle), \ \text{where}\ f=\langle e,e\rangle.\label{h2conic}
\end{eqnarray}
The computation of the resultant gives
$\Res{}_t(ch_1,ch_2)=u^m(A+Bu+...)$, here
$m=i(q,\{\Res{}_t(h_1,h_2)\}\cap C)$. On the other side we can
compute the number of common points $(0,t_0)$ on
curves $\{ch_1(u,t)\}$ and $\{ch_2(u,t)\}$ counted with multiplicities. This number coincides
with $m$ (see \cite{buse}, Proposition 5). The second curve
$ch_2(u,t)=u\cdot ch_3(u,t)$ is reducible. Therefore, the
resultant with respect to $t$ is
\begin{eqnarray*}\nonumber
    \Res(ch_1,ch_2)&=& \Res(ch_1,u)\Res(ch_1,ch_3)=u^{\deg (h_1,t)}\Res(ch_1,ch_3)\\
    &=&u^{2n}\Res(ch_1,ch_3)
\end{eqnarray*}

The second factor has a representation $ch_3=uD(t)-N(t)$, where
$D(t)=(2fe_0'-f'e_0)k/2+e_0\langle
    w,k  a \rangle$ and $ N(t)=e_0\langle
    e'e_0-ee_0',a-b \rangle$.
It is easy to see that $\gcd(N(t),e_0^2)=e_0$. If
$ch_1(0,t_0)=ch_3(0,t_0)=0$ then $e_0(t_0)=0$. The first curve
$\{ch_1(u,t)\}$ is hyper-elliptic, i.e.  the projection to $t$
axes $pr:\{ch_1\}\to t$ is a map two-to-one. The curve
$\{ch_1(u,t)\}$ has the following discriminant with respect to
$u$:
\begin{equation}\label{discrim}
   \text{disc}(ch_1,u)=e_0^2(\ (k e_0/2-\langle e,b-a\rangle )^2
    - 2e_0\langle e,k a\rangle +k f\ ).
\end{equation}
It is ease to see that  point $(0,t_0)$ is a singular point on the
curve $\{ch_1(u,t)\}$ if and only if $e_0(t_0)=0$. Therefore the
point $(0,t_0)$ has multiplicity at least two as a point of the
intersection of two curves $\{ch_1\}\cap\{ch_3\}$.

We will prove that the multiplicity of the intersection
 of two curves $\{ch_1\}\cap\{ch_3\}$ at the
point $(0,t_0)$ equals to two if $e_0(t_0)=0$. For simplicity we
assume that $t_0=0$. The first equation (\ref{h1conic}) in the
local ring $R=\R[u,t]_{\langle u,t\rangle}$ is
\begin{eqnarray}%\label{}
   && \overline{ch}_1=a_{21}u^{2} t+a_{20}u^{2}+a_{12}u{t}^{2}-a_{11}ut+a_{02}{t}^{2},\
     \text{where}  \nonumber \\
&&[ a_{21}, a_{20}, a_{12}, a_{11}, a_{02} ] = [\tilde{e}_{0}k
\langle e, a\rangle ,-f k/2,-{\tilde{e}_{{0}}}^{2}\,k/2,\tilde{
e}_{{0}}\langle e,{a-b}\rangle ,1/2\,\tilde{e}_{{0}}^{2}]\
 \end{eqnarray}
and $t\tilde{e}_0=e_0$. The second equation (\ref{h2conic}) in the
local ring $R$ is
\begin{eqnarray}%\label{}
 &&  \overline{ch}_2=u(\overline{ch}_3),\quad
 \overline{ch}_3=
 b_{{12}}u{t}^{2}+b_{{11}}ut+b_{{10}}u+b_{{02}}{t}^{2}+b_{{01}}t, \nonumber \\
   && \mbox{where}\quad [b_{{12}},b_{{11}},b_{{10}},b_{{02}},b_{{01}}]=  \nonumber \\
    && [\tilde{e}_{0}^{2}k\langle e', a\rangle,-{f}'
\tilde{e}_{{0}}-\tilde{e}_{0}{e}'_{{0}}k\langle e, a\rangle, k f
\left ({e}'_{{0}}+\,{e}'_{{0}}\right ),- \tilde{e}_{0}^2k \langle
e',{a-b}\rangle ,\tilde{e}_{0}{e}'_{{0}} \langle
e,{a-b}\rangle].\nonumber
\end{eqnarray}

 An easy computation with MAPLE shows that
\begin{equation}%\label{}
    \Res{}_t(\overline{ch}_1,\overline{ch}_3)=u^2(K_4u^4+K_3u^3+K_2u^2+K_1u+K_0).  \nonumber
\end{equation}
Therefore the point $(0,0)$ has multiplicity two if and only if
$K_0\not=0$, i.e. $K_0$ is a unit in the local ring $R$. A
straightforward computation shows that
\begin{equation}%\label{}
    K_0=a_{{02}}\left ({b_{{10}}}^{2}a_{{02}}+{b_{{01}}}^{2}a_{{20}}+b_{{01}}b_{{10}}a_{{
11}}\right ) = f k{\tilde{e}_{{0}}}^{4}{{e}'_{{0}}}^{2} \left (k
\langle e,e \rangle +\frac{3}{4}\,{\langle e,a-b\rangle}^{2}\right
).  \nonumber
\end{equation}
Since $f(0)\not=0$ and $e_0'(0)\not=0$  by the condition
(\ref{assump}) we conclude that $K_0\not=0$ for a general conic.
 Hence,  the multiplicity $i(q,\{\Res{}_t(h_1,h_2)\}\cap \Qc)$ is equal
 to
$\deg(h_1,t)+2\deg (e_0,t)=4n$. Therefore, we have
\begin{eqnarray*}\nonumber%\label{}
d_w(\Res{}_t(h_1,h_2))&=&2\deg(\Res{}_t(h_1,h_2))-i(q,\{\Res{}_t(h_1,h_2)\}\cap
\Qc)\\&=&2(5n-2)-4n=6n-4.
\end{eqnarray*}
\end{proof}

%%%%%%%%%%%%%%%%%%%%%%%%%%%%%%%%%%%%%%%%%%%%%%%%%%%%%%%%%%%%%%%%%%%%%%%%%%%%%%%%%%55

\begin{rem}
  We conjecture  that the degree of the
 hypersurface $\Gamma(\Ep)$ is $\deg \Vc(\Eq)+\deg(w) -\deg(\gcd(w))$,
 where $w$ is defined by the formula (\ref{seq-w}).
\end{rem}

%%%%%%%%%%%%%%%%%%%%%%%%%%%%%%%%%%%%%%%%%%%%%%%%%%%%%%%%%%%%%%%%%%%%%%%%

\section{Examples and special cases.}

Let $n$ be the degree of the spine curve $\Ec=\{e(t)\in \R^4\}$.
And let $c_n(\ee)$ be the  degree of the hypersurface
$\Gamma(\Ep)$ with the spine curve $\Ec$.

\vskip0.2cm { Polynomial case.} Assume that the spine curve is
polynomial, i.e. $e_0=1$.
 By the theorem in \cite{xu} the degree of  hypersurface $\Gm(\Ep)$ with
the polynomial spine is at most $4n-2$.

 For the
general spine curve we have $c_n(\ee)=6n-4$, i.e. $c_n(\ee)\leq
6n-4$. The lower bound is not clear. There are examples of spine
curves with the following degrees:
\begin{eqnarray}\label{c2e}
    c_2(\ee)&=&3,4,5,6,8;\nonumber\\
    c_3(\ee)&=&6,7,8,9,10,11,12,14;\nonumber\\
    c_4(\ee)&=&8,9,10,11,12,13,14,15,16,17,18,20.\nonumber
\end{eqnarray}
It seems that there does not exist a spine curve  such that
$c_n(\ee)=6n-5$.

We consider three examples.
\begin{exmp} \label{doubleellipsoid}
Let us consider the following spine curve:
$e(t)=\left(0,0,\frac{8t}{1+t^2},
\frac{3-3t^2}{1+t^2}\right),\\n=2.$ This is a proper parametrization
of an ellipse in $\R^4$. 
 We find the \pl coordinate vector $P=\Eq_{}(t)C\wedge
\Eq_{}'(t)C$ and $q=\gcd(P)=1$. Also, we see that $ \deg \Vc({\Eq}
)=\deg P-\deg(q,t)=8$ and $\gamma=\deg (w)=2$. If we run the
$\mu$-basis algorithm with two input vectors $\Ec_{}(t)C,
\Ec_{}'(t)C$ we get the output two vectors $E_1$ and $E_2$:
\begin{eqnarray}E_1 \cdot \yyq^T  &=& 4\,{t}^{3}y_{{3}}+\left (-u-41\,y_{{0}}\right ){t}^{2}+12\,ty_{{3}}+9
\,y_{{0}}-u-6\,y_{{4}}, \nonumber\\
E_2 \cdot \yyq^T &=&\left (u-9\,y_{{0}}-6\,y_{{4}}\right
){t}^{3}-12\,{t}^{2}y_{{3}}+ \left (41\,y_{{0}}+u\right
)t-4\,y_{{3}}.  \nonumber
\end{eqnarray}
Now we can find the implicit equation $G=\Res(
\E_1\cdot\hat{y}^T,\E_2\cdot\hat{y}^T,t)$ of the dual variety
$\Vc({\Eq} )$. The polynomial $G$ contains 26 monomials and has
degree 6 (as in Proposition~\ref{degree_of_G}). The equation of
the hypersurface $\Gamma(\Ep)$ is defined by the polynomial\\
$F(y_0,...,y_4) =y_0^2G(\langle
y,y\rangle/y_0,y_0,y_1,y_2,y_3,y_4)$ of degree 8. Since,
\begin{eqnarray}F(1,y_1,y_2,y_3,0)= \left
({y_{{1}}}^{2}+16+{y_{{2}}}^{2}+8\,y_{{3}}+{y_{{3}}}^{2}\right )
\left
({y_{{1}}}^{2}+{y_{{2}}}^{2}+16-8\,y_{{3}}+{y_{{3}}}^{2}\right )\nonumber\\
\left (-225+25\,{y_{{1}}}^{2}+25\,{y_{{2}}}^{2}+9\,{y_{{3}}}^{2}
\right )^{2},\nonumber\end{eqnarray}
 the 0-envelope of the canal surface
${\Env}_0(\ee)=\Gm(\Ep)\cap\{y_4=0\}$ is reducible. The  canal
surface $\Cc$ is the double ellipsoid of revolution
$(-225+25\,{y_{{1}}}^{2}+25\,{y_{{2}}}^{2}+9\,{y_{{3}}}^{2})^2$.
Indeed, for the computation of ${\Cc}(\Ep)$ we should assume that
the variable $y_4=0$ and to repeat the  same steps as above. We
should consider only the first 5 coordinates of the vectors
$\Ec_{}(t)C, \Ec_{}'(t)C$. Let us denote these two vectors with 5
coordinates by ${D}_1,{D}_2$. But this time we see that the \pl
vector $\hat{P}={D_1}\wedge{D_2}$ has a
 non-trivial common divisor, i.e. $\hat{q}=\gcd(\hat{P})=t^2-1$.
 So, using the $\mu$-basis algorithm we find the $\mu$-basis $R_1,R_2$ for the input
 ${D_1},{D_2}$. In this case we see that $\deg R_1=\deg
 R_2=2$.
Now we find the resultant $\check{G}=\Res{}_t(R_1 \cdot \check{y}^T,R_2 \cdot \check{y}^T)=\left
(16\,{y_{{3}}}^{2}+225\,{y_{{0}}}^{2}-25\,y_{{0}}u\right )^{2}$,
where $\check{y}=(u,y_0,y_1,y_2,y_3)$. After the substitution
$u=(y_1^2+y_2^2+y_3^2)/y_0$ we obtain the implicit equation of the
canal surface the double ellipsoid
$(-225+25\,{y_{{1}}}^{2}+25\,{y_{{2}}}^{2}+9\,{y_{{3}}}^{2})^2$.
We can see this geometrically, too. The point $e(t)\in\R^4$
corresponds to the sphere $S(e(t))\in\R^3$ with a center on the
$y_3$-axis. If $t\in [-1/2,1/2]$ then the  sphere  $S(e(t))$ is
tangent to the ellipsoid
$EL=(-225+25\,{y_{{1}}}^{2}+25\,{y_{{2}}}^{2}+9\,{y_{{3}}}^{2})$,
and inside this ellipsoid. Moreover, the real envelope of the
family $S(e(t)),t\in  [-1/2,1/2]$ is the ellipsoid $EL$. Note that
the  sphere $S(e(1/t))$  has the same center but the opposite
radius to the sphere $S(e(t))$, i.e. it has the opposite
orientation. Therefore, the real envelope of the family
$S(e(t)),t\in (-\infty,-2]\cap [2,\infty)$ is the same  ellipsoid
$EL$. Hence, from the point of Laguerre geometry the envelope of
the whole family $S(e(t))$ is the double ellipsoid $EL^2$. Note,
that the d-offset %${\Env}_d(\ee)$
to the canal surface, in this case is the d-offset to ellipsoid
$EL$ and it has degree 8. Also, we can check that by
Theorem~\ref{degree} the degree of the $\Gm(\Ep)$ hypersurface is
8, too. For a detailed study and other examples of canal surfaces
with a quadratic spine curve we recommend to look at the paper
\cite{KZ}.
\end{exmp}

%%%%%%%%%%%%%%%%%%%%%%%%%%%%%%%%%%%%%%%%%%%%%%%%%%%%%%%%%%%%%%%%%%%%%%%%%%%%%%%%%%%%%%%%%%%%%%%%%%

\begin{exmp}
 Consider the polynomial spine curve
$e(t)= \left({ {3t^2+1}, {4{t}^{2}+{t}},0, 5{t}^{2}} \right),$
$n=2.$  We find the \pl coordinate vector $P=\Eq(t)C\wedge
\Eq'(t)C$ and $q=\gcd(P)=1$. Also, we see that $ \deg \Vc({\Eq}
)=\deg P-\deg q=4$ and $\gamma=\deg (w)=1$. If we run the
$\mu$-basis algorithm with two input vectors $\Eq_{}(t)C,
\Eq'_{}(t)C$ we get the output of two vectors
\begin{eqnarray}
E_1&=&-u+\left (-1-7\,{t}^{2}-8\,{t}^{3}\right )y_{{0}}+\left
(2+6\,{t}^{2} \right )y_{{1}}+2\,t\left (1+4\,t\right
)y_{{2}}-10\,{t}^{2}y_{{4}} ,\nonumber\\
  E_2&=&-t\left (7+12\,t\right )y_{{0}}+6\,ty_{{1}}+\left (1+8\,t\right )y_{{2
}}-10\,ty_{{4}}
 ,\nonumber
   \end{eqnarray}
 and find the
implicit equation $G$ of the dual variety $\Vc({\Eq} )$ (it
contains 54 monomials, so we do not present an explicit formula).
Finally, we find that $c_2(\ee)=d_w(G)=5$.
 For this example, we have
$\deg{\Cc}=\deg\Gm{(\Ep)}$, i.e. the implicit degree of the canal
surface is 5. Note that this contradicts Theorem 4 in
\cite{xu}, because in this example the degree of the canal surface
is an odd number. It seems that the mentioned theorem gives only an
upper bound estimation, but not the exact degree formula of  canal
surfaces with polynomial spine curve.
\end{exmp}
%%%%%%%%%%%%%%%%%%%%%%%%%%%%%%%%%%%%%%%%%%%%%%%%%%%%%%%%%%%%%%%%%%%%%%%%%%%%%%%%%%%%%%%%%%%%%%%%%%%%%%%

\begin{exmp}
In the next example we take the following spine curve:\\ $e(t)=
\left( {\frac {\left (1-{t}^{2}\right )^{2}}{\left
(1+{t}^{2}\right )^{2}}}, 2\,{\frac {t\left (1-{t}^{2}\right
)}{\left (1+{t}^{2}\right )^{2}}},2 \,{\frac {t}{1+{t}^{2}}},1
\right), n=4$. The first three coordinates define the Viviani
curve, i.e. it is intersection curve of the sphere and the tangent
cylinder.
We find the \pl coordinate vector $P=\Eq(t)C\wedge \Eq'(t)C$ and
$q=\gcd(P)=1$. Also, we see that $ \deg \Vc({\Eq} )=\deg P-\deg
q=6$. If we run the $\mu$-basis algorithm with two input vectors
$\Eq_{}(t)C, \Eq'_{}(t)C$ we get output of two vectors
\begin{eqnarray}
E_1&=&\left(
0,4+4\,{t}^{2},4-4\,{t}^{2},6\,t-2\,{t}^{3},6\,t+2\,{t}^{3},4+4\,{t}
^{2}\right),
 \nonumber\\
E_2&=&\left( 0,4\,t+4\,{t}^{3},4\,t\left (-1+{t}^{2}\right
),2-6\,{t}^{2},2+ 6\,{t}^{2},4\,t+4\,{t}^{3}\right),
   \nonumber
   \end{eqnarray}
 both of degree 3  and find the
implicit equation $G$ of the dual variety $\Vc({\Eq} )$ (it
contains 58 monomials). Finally, we find that
$c_4(\ee)=d_w(G)=10$.
For this example, we have
$\deg \Cc=\deg {\Gm}(\Ep)$, i.e. the implicit degree of
the canal surface is 10.
\end{exmp}

%%%%%%%%%%%%%%%%%%%%%%%%%%%%%%%%%%%%%%%%%%%%%%%%%%%%%%%%%%%%%%%%%%%%%%%%%%%%%%%%%%%%%%%%%%%%%%%%%%%%%%%%

\end{document}